\documentclass[11pt,twoside]{amsart}
\usepackage{amsxtra}
\usepackage{color}
\usepackage{amsopn}
\usepackage{amsmath,amsthm,amssymb}
\usepackage{enumerate}

\title{Coupled $\SU(3)$-structures and Supersymmetry}
\author{Anna Fino and Alberto Raf{}fero}
\subjclass[2010]{53C10, 81T30, 53C29, 83E30, 53C25}
\thanks{The authors are supported by the Project PRIN ``Variet\`a reali e complesse: geometria, topologia e analisi armonica'', 
by the Project FIRB ``Geometria Differenziale e Teoria Geometrica delle Funzioni'', and by GNSAGA of INdAM}
\address{Dipartimento di Matematica ``G. Peano", Universit\`a di Torino, via Carlo Alberto 10, 10123 Torino, Italy}
\email{annamaria.fino@unito.it}
\email{alberto.raffero@unito.it}

\theoremstyle{remark}
\newtheorem{remark}{Remark}[section]

\theoremstyle{definition}
\newtheorem{Definition}[remark]{Definition}
\newtheorem{Example}[remark]{Example}

\theoremstyle{plain}
\newtheorem{Theorem}[remark]{Theorem}
\newtheorem{Proposition}[remark]{Proposition}
\newtheorem{Corollary}[remark]{Corollary}

\newcommand{\beq}{\begin{equation}}
\newcommand{\eeq}{\end{equation}}
\newcommand{\bqn}{\begin{eqnarray}}
\newcommand{\eqn}{\end{eqnarray}}
\newcommand{\bqne}{\begin{eqnarray*}}
\newcommand{\eqne}{\end{eqnarray*}}

\newcommand{\R}{{\mathbb R}}

\newcommand{\C}{{\mathbb C}}

\newcommand{\tz}{\tau_0}
\newcommand{\tu}{\tau_1}
\newcommand{\td}{\tau_2}
\newcommand{\ttr}{\tau_3}
\newcommand{\W}{\wedge}

\newcommand{\f}{\varphi}
\newcommand{\e}{\varepsilon}
\newcommand{\de}{\delta}

\newcommand{\psip}{\psi_+}
\newcommand{\psim}{\psi_-}

\newcommand{\SU}{{\rm SU}}
\newcommand{\ddt}{\frac{\partial}{\partial t}}
\newcommand{\Wd}{w_2^-}
\newcommand{\dt}{\frac{d}{dt}}
\newcommand{\lt}{\lambda_t}
\newcommand{\tut}{u_{t}}
\newcommand{\ttrt}{\eta_{t}}

\textheight=8in
\textwidth=6in
\oddsidemargin=0.25in
\evensidemargin=0.25in

\begin{document}
\maketitle
\begin{abstract} We review coupled $\SU(3)$-structures, also known in the literature as restricted half-flat structures, in relation to supersymmetry. In particular, we study special classes 
of examples admitting such structures and the behaviour of flows of $\SU(3)$-structures with respect to the coupled condition.
\end{abstract}
\medskip

\section{Introduction}
In the physical literature, manifolds endowed with $\SU(3)$-structures have been frequently considered to construct string vacua 
\cite{DGLM, GLL, Gur, GLM, KLM, Lar, LM, LT, Str, Tom}. 

In this paper, we are mainly interested in the class of $\SU(3)$-structures that are relevant for $\mathcal{N} = 1$ compactifications of type IIA string theory on spaces of the form 
${\rm AdS}_4\times N$, where ${\rm AdS}_4$ is the four-dimensional anti de Sitter space and $N$ is a six-dimensional compact smooth manifold. 
The requirement of $\mathcal{N}=1$ supersymmetry implies the existence of a globally defined complex spinor on the internal 6-manifold $N$. 
As a consequence, the structure group of $N$ reduces to $\SU(3)$, which is equivalent to the existence on $N$ of an almost Hermitian structure $(h,J,\omega)$  
and a complex (3,0)-form $\Psi$ of nonzero constant length satisfying some compatibility conditions. 
As shown in \cite{LT}, in the case where the two $\SU(3)$-structures are proportional, imposing the Killing 
spinor equations for four-dimensional $\mathcal{N}=1$ string vacua of type IIA on ${\rm AdS}_4$ constrains the intrinsic torsion of the $\SU(3)$-structure to lie in 
$\mathcal{W}_1^-\oplus\mathcal{W}_2^-$. Further constraints on the torsion forms are implied by the Bianchi identities for the background fluxes in the absence of sources. 
Moreover, all these constraints are not only necessary but also sufficient to guarantee the existence of solutions. 
Examples of this kind of solutions were considered for instance in \cite{CKKLTZ, KLT, LT, Tom}. 

$\SU(3)$-structures whose torsion class is $\mathcal{W}_1^-\oplus\mathcal{W}_2^-$ are known as {\it coupled} $\SU(3)$-structures \cite{Sal} in the mathematical literature 
and are characterized by the fact that they are {\it half-flat} $\SU(3)$-structures, i.e., both $\psip:=\Re(\Psi)$ and $\omega\W\omega$ are closed forms, 
having $d\omega$ proportional to $\psip$. Coupled structures were recently considered in \cite{FR, MS, Raf}. They are of interest for instance because their underlying almost 
Hermitian structure is {\it quasi-K\"ahler} and because they generalize the class of {\it nearly K\"ahler} $\SU(3)$-structures, namely the half-flat structures having $d\omega$ proportional 
to $\psip$ and $d\psim$ proportional to $\omega\W\omega$, where $\psim:=\Im(\Psi)$.  

Up to now, very few examples of manifolds admitting complete nearly K\"ahler structures are known. In the homogeneous case 
there are only finitely many of them by \cite{But}, while new complete inhomogeneous examples were recently found on $S^6$ and $S^3\times S^3$ in \cite{FH}.
Among the remarkable properties of nearly K\"ahler structures in dimension 6, it is worth recalling here that the Riemannian metric $h$ they induce is Einstein, 
that is, its Ricci tensor ${\rm Ric}(h)$ is a scalar multiple of $h$. 
It is then quite natural to ask whether coupled structures inducing Einstein metrics can exist or if requiring that a coupled structure induces an Einstein metric implies that it is actually 
nearly K\"ahler. An attempt to find coupled Einstein structures on explicit examples was done in \cite{Raf}, where the existence of invariant coupled Einstein structures was excluded on 
the compact manifold $S^3\times S^3$ for ${\rm Ad}(S^1)$-invariant Einstein metrics and on all the six-dimensional solvmanifolds. While writing this paper, we found out that the work 
\cite{Tom}, which provides a family of ${\rm AdS}_4$ vacua in IIA string theory, contains an example of a coupled Einstein structure. This answers to the question and can be used to 
construct examples of $G_2$-structures with non-vanishing torsion inducing Einstein and Ricci-flat metrics.

One of the main motivations to study half-flat structures is due to the role they play in the construction of seven-dimensional manifolds with holonomy contained in $G_2$. 
More in detail, by a result of Hitchin \cite{Hit}, on a 6-manifold $N$ it is possible to define a flow for $\SU(3)$-structures, the so-called {\it Hitchin flow}, which can be solved for any 
given analytic half-flat structure as initial condition. A solution to the flow equations consists of a family of half-flat structures depending on a parameter $t\in I\subseteq\R$ 
and allows to define a torsionless $G_2$-structure on the product manifold $I\times N$. 
One question that naturally arises is then whether coupled structures, which are in particular half-flat, are preserved by this flow. 

A generalization of the Hitchin flow can be introduced considering an $\SU(3)$-structure, not necessarily half-flat, and using it to define a $G_2$-structure with torsion on 
the product manifold $I \times N$. The evolution equations for the differential forms defining the $\SU(3)$-structure can then be obtained by requiring that the intrinsic torsion of the 
$G_2$-structure belongs to a certain torsion class. Of course, the Hitchin flow equations can be recovered as a special case of this generalized flow. 
This idea was considered for example in \cite{DLS}, where the generalized Hitchin flow was used as a tool to study the moduli space of $\SU(3)$-structure manifolds 
constituting the internal compact space for four-dimensional $\mathcal{N}=\frac12$ domain wall solutions of heterotic string theory. 
In that case, the authors considered the non-compact seven-dimensional manifold defined by combining the direction perpendicular to the domain wall and 
the internal 6-manifold and observed that it is possible to define on it a $G_2$-structure whose non-vanishing intrinsic torsion forms can be recovered using the results of \cite{GLL, LM}.

Furthermore, homogeneous spaces admitting coupled structures were used to provide examples of heterotic ${\mathcal N} = \frac12$ 
domain wall solutions with vanishing fluxes in \cite{KLM} and an attempt to generalize this result in a more general case was done in \cite{GLL}.

The present paper is organized as follows. 
In Section \ref{preliminaries} we review some definitions and properties regarding $\SU(3)$- and $G_2$-structures. In Section \ref{cpdsusy} we 
study coupled structures in relation to supersymmetry. In Section \ref{exsec} we describe some explicit examples and in Section \ref{flows} we study the behaviour of flows of 
$\SU(3)$-structures with respect to the coupled condition.
\ \\
\ \\
\noindent {\em{Acknowledgements}}.  The authors would like to thank Thomas Madsen for useful conversations.

\section{Review of $\SU(3)$-structures and $G_2$-structures}\label{preliminaries}\label{pre}
An $\SU(3)$-structure on a six-dimensional smooth manifold $N$ is the data of a Riemannian metric $h$, an orthogonal almost complex structure $J$, a 2-form $\omega$ 
related to $h$ and $J$ via the identity $\omega(\cdot,\cdot) = h(J\cdot,\cdot)$ and a $(3,0)$-form of nonzero constant length $\Psi=\psip+i\psim$ which is {\it compatible} with $\omega$, 
i.e.,
$$
\omega\W\Psi=0,
$$
and satisfies the {\it normalization condition}
$$
\frac{i}{2}\left(\Psi\W\overline\Psi\right) = \frac23\omega^3 = 4dV_h,
$$
where $dV_h$ is the Riemannian volume form of $h$. At each point $p\in N$ there exists an $h$-orthonormal frame $(e^1,\ldots,e^6)$ of $T^*_pN$, 
called {\it adapted frame} for the $\SU(3)$-structure, such that
\begin{equation}
\begin{array}{rcl}
\omega &=& e^{12}+e^{34}+e^{56},\\
\Psi &=& (e^1+ie^2)\W(e^3+ie^4)\W(e^5+ie^6),
\end{array}\nonumber
\end{equation}
and whose dual frame $(e_1,\ldots,e_6)$ is adapted for $J$, i.e.,
$$Je_i = e_{i+1}, \ i=1,3,5.$$
\begin{remark}
Here and hereafter, the notation $e^{ijk\cdots}$ is a shortening for the wedge product $e^i\W e^j \W e^k\W\cdots$. Moreover, we will also use the notation $\theta^n$ as a shortening 
for the wedge product of a differential form $\theta$ by itself for $n$-times.
\end{remark}

Using the results of \cite{Hit1, Rei}, one can show that an $\SU(3)$-structure actually depends only on the pair $(\omega,\psip)$, let us recall briefly how. 
For each $p\in N$, let $V:=T_pN$, denote by $A:\Lambda^5(V^*) \rightarrow V\otimes\Lambda^6(V^*)$ the canonical isomorphism given by $A(\xi) = v \otimes \Omega$, 
where $i_v\Omega = \xi$, and define for a fixed $\rho \in\Lambda^3(V^*)$
$$
K_\rho : V \rightarrow V\otimes\Lambda^6(V^*),\ \  K_\rho(v) = A((i_v \rho)\W\rho)
$$ 
and 
$$
\lambda : \Lambda^3(V^*) \rightarrow (\Lambda^6(V^*))^{\otimes2},\ \  \lambda(\rho) = \frac16{\rm tr}K^2_\rho.
$$
If $\lambda(\rho)\neq0$,  $\sqrt{|\lambda(\rho)|} \in \Lambda^6(V^*)$ defines a volume form by choosing the orientation of $V$ for which $\omega^3$ is positively oriented. Moreover, 
whenever $\lambda(\rho)<0$ the following endomorphism defines an almost complex structure 
$$
J_{\rho} := -\frac{1}{\sqrt{-\lambda(\rho)}}K_\rho.
$$
An $\SU(3)$-structure on $N$ can then be defined as a pair $(\omega,\psip)$ such that the 2-form $\omega$ is non degenerate, i.e., $\omega^3\neq0$, the 3-form $\psip$ 
is compatible with $\omega$ and satisfies $\lambda(\psip(p))<0$ for each $p\in N$, the almost complex structure is $J=J_{\psip}$, 
the imaginary part of $\Psi$ is given by $\psim:=J\psip$, the normalization condition holds and $h(\cdot,\cdot) :=\omega(\cdot,J\cdot)$ defines a Riemannian metric.

The {\it intrinsic torsion} $\tau$ of an $\SU(3)$-structure is completely determined by the exterior derivatives of $\omega, \psip, \psim,$ as shown in \cite{CS}. 
More in detail, we have
\begin{equation}\renewcommand\arraystretch{1.4}
\begin{array}{lcl}\label{hf}
d\omega &=& -\frac32w_1^-\psip +\frac32 w_1^+\psim + w_3 + w_4\W\omega,\\
d\psip &=&w_1^+\omega^2 -w_2^+\W\omega + w_5\W\psip,\\
d\psim &=& w_1^-\omega^2 - w_2^-\W\omega +Jw_5\W\psip,
\end{array}
\end{equation}
where $w_1^\pm\in C^\infty(N)$, $w_2^\pm\in\Lambda^{1,1}_0(N)$, $w_3\in\Lambda^{2,1}_0(N)$, $w_4,w_5\in\Lambda^1(N)$ are the {\it intrinsic torsion forms} 
of the $\SU(3)$-structure. 
It is then possible to divide the $\SU(3)$-structures in classes by seeing which torsion forms vanish. 
For example, if $\omega,\psip$ and $\psim$ are all closed, then all the torsion forms 
vanish and the manifold $N$ is {\it Calabi-Yau}. If all the torsion forms but $w_1^-$ vanish, the $\SU(3)$-structure is said to be {\it nearly K\"ahler} and we write 
$\tau\in\mathcal{W}_1^-$. If both $\psip$ and $\omega^2$ are closed, then the torsion forms $w_1^+, w_2^+, w_4,w_5$ vanish, the $\SU(3)$-structure is said to be {\it half-flat} and we 
write $\tau\in\mathcal{W}_1^-\oplus\mathcal{W}_2^-\oplus\mathcal{W}_3$. 
As recently shown in \cite{ACFH}, the $\SU(3)$-structures can also be described in terms of a characterizing spinor and the spinorial field equations it satisfies.

In \cite{BV}, it was shown that the Ricci and the scalar curvature of the metric $h$ induced by an $\SU(3)$-structure can be expressed in terms of the intrinsic torsion forms. 
In particular, if we consider the projections $E_1:\Lambda^2(N)\rightarrow\Lambda^{1,1}_0(N)$ and $E_2:\Lambda^3(N)\rightarrow\Lambda^{2,1}_0(N)$ given by 
$$\renewcommand\arraystretch{1.4}
\begin{array}{rcl}
E_1(\beta) &=&\frac12(\beta+J\beta)-\frac{1}{18}*((*(\beta+J\beta)+(\beta+J\beta)\W\omega)\W\omega)\omega,\\
E_2(\rho) &=& \rho-\frac12*(J\rho\W\omega)\W\omega-\frac14*(\rho\W\psim)\psip-\frac14*(\psip\W\rho)\psim,
\end{array}
$$
where $*$ is the Hodge operator defined using $h$ and the volume form $dV_h$,
then the traceless part of the Ricci tensor has the following expression
$${\rm Ric}^0(h) = \iota^{-1}(E_1(\phi_1)) + \gamma^{-1}(E_2(\phi_2)),$$
where the 2-form $\phi_1$ and the 3-form $\phi_2$ depend on the intrinsic torsion forms and their derivatives and the maps $\iota:S^2_+(N)\rightarrow\Lambda^{1,1}_0(N)$ and $
\gamma:S^2_-(N)\rightarrow\Lambda^{2,1}_0(N)$ are (pointwise) $\mathfrak{su}(3)$-modules isomorphisms (see \cite{BV} for the details).  
The Ricci tensor of $h$ can then be recovered from the identity 
$${\rm Ric}(h) = \frac16{\rm Scal}(h)h +  {\rm Ric}^0(h).$$

Starting from an $\SU(3)$-structure $(\omega,\psip)$ on a 6-manifold $N$,  it is possible to construct a $G_2$-structure on the 7-manifold 
$I\times N$, where $I\subseteq\R$ is a connected open interval. Before describing how, we recall that a $G_2$-structure on a  seven-dimensional manifold $M$ is characterized by 
the existence of a globally defined 3-form $\varphi$ 
inducing a Riemannian metric $g_{\varphi}$ and a volume form $dV_{g_\f}$ given by
\begin{equation}\label{gfi}
g_{\varphi} (X, Y) dV_{g_\f}=  \frac16i_X \varphi \wedge i_Y \varphi \wedge \varphi,
\end{equation}
for  any pair of vector fields $X,Y\in\mathfrak{X}(M)$.
The {\it intrinsic torsion} of a $G_2$-structure $\f$ is completely determined by the exterior derivatives of $\f$ and $*_\f\f$, where $*_\f$ is the Hodge operator defined using the metric 
$g_\f$ and the volume form $dV_{g_\f}$. More in detail, it holds \cite{Br}
\begin{equation}
\begin{array}{rcl}
d\f &=& \tz*_\f\f+3\tu\W\f+*_\f\ttr,\\
d*_\f\f&=& 4\tu\W*_\f\f+\td\W\f,
\end{array}
\end{equation}
where $\tz\in C^\infty(M)$, $\tu\in\Lambda^1(M)$, $\td\in\Lambda^2_{14}(M)= \{\beta\in\Lambda^2(M) : *_\f(\f\W\beta) = -\beta\}$, 
$\ttr\in\Lambda^3_{27}(M)= \{\rho\in\Lambda^3(M) : \f\W\rho = 0 \mbox{\ and} *_\f\f\W\rho = 0\}$ are the {\it intrinsic torsion forms} of the $G_2$-structure. 
Also in this case it is possible to classify the $G_2$-structures in terms of the non-vanishing torsion forms. For example, if $\f$ is both closed and co-closed, then all the torsion forms 
vanish, ${\rm Hol}(g_\f)\subseteq G_2$ and the $G_2$-structure is called {\it parallel}. If $\f$ is a closed form, then all the torsion forms but $\td$ vanish and the $G_2$-structure is 
said to be {\it calibrated}. If the only non-vanishing torsion forms are $\tu$ and $\td$, then at least locally the metric $g_\f$ is conformally equivalent to the metric induced by a calibrated 
$G_2$-structure and the $G_2$-structure is called {\it locally conformal calibrated}. If the only vanishing torsion form is $\td$, then the $G_2$-structure is said to be {\it integrable}. 
In this case there exists a unique affine connection with totally skew-symmetric torsion preserving the $G_2$-structure by \cite{FI}.

Consider now $(\omega,\psip)$ and two smooth functions $F:I\rightarrow \C-\{0\}$ and $G:I\rightarrow \R^+$, the following 3-form defines a $G_2$-structure on $I\times N$  
(\cite{KMT})
$$\f = \Re(F^3\Psi)+G|F|^2\omega\W dt,$$
where $t$ is the coordinate on $I$. Moreover, we have
\begin{eqnarray}
g_\f & = & G^2dt^2+|F|^2h,\nonumber\\
dV_{g_\f} & = & G|F|^6 dt\W dV_h,\nonumber\\
*_{\f}\f & = & G\,\Im(F^3\Psi)\W dt + \frac12|F|^4\omega^2.\nonumber
\end{eqnarray}
For some particular choices of the interval $I$ and the functions $F$ and $G$, we obtain the following remarkable manifolds:
\begin{itemize}
\item the {\it cylinder} $Cyl(N)$ with metric  $dt^2+h$, if $I=\R$ and $G, F\equiv 1$,
\item the {\it cone} $C(N)$ with the metric $dt^2+t^2h$, if $I=\R^+$, $G\equiv 1$ and $F(t)=t$,
\item the {\it sin-cone} $SC(N)$ with the metric $dt^2+\sin^2(t)h$, if $I=(0,\pi)$, $G\equiv1$ and $F(t)=\sin(t)e^{i\frac{t}{3}}$.
\end{itemize}
Observe that with the choice $G\equiv1$, the manifold $I\times N$ with metric $dt^2+|F|^2h$ is the warped product of $I$ and $N$ with warping function $|F|$. 
Using the expression of the Ricci tensor of the warped product metric \cite{On}, it is possible to show the following general properties (see also \cite{BG}).
\begin{Proposition}\label{einsteincone}
Let $(M^m,g)$ be a Riemannian manifold of dimension $m$. Then the cone metric $dt^2+t^2g$ is Ricci-flat if and only if the metric $g$ is Einstein with ${\rm Ric}(g)= (m-1)g$.
\end{Proposition}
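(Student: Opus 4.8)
The plan is to recognize the cone metric $dt^2+t^2g$ as the warped product metric on $\R^+\times M$ with one-dimensional base $(\R^+,dt^2)$, fiber $(M^m,g)$, and warping function $f(t)=t$, and then to read off its Ricci tensor directly from O'Neill's formulas \cite{On}. For a warped product $B\times_f F$ with $n:=\dim F$, these formulas split the Ricci tensor into three blocks: a base-base block ${\rm Ric}_B(X,Y)-\frac{n}{f}{\rm Hess}_f(X,Y)$, a vanishing mixed block, and a fiber-fiber block ${\rm Ric}_F(V,W)-\left(\frac{\Delta f}{f}+(n-1)\frac{|\nabla f|^2}{f^2}\right)\langle V,W\rangle$, where $\langle\cdot,\cdot\rangle$ denotes the full warped-product metric and the derivatives of $f$ are computed on $B$. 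In our situation $n=m$.

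First I would record the elementary data attached to $f(t)=t$ on the one-dimensional base: $f'=1$ and $f''=0$, whence $|\nabla f|^2=1$, ${\rm Hess}_f=f''\,dt\otimes dt=0$, and $\Delta f=f''=0$. Since a one-dimensional manifold is automatically flat, we also have ${\rm Ric}_B=0$. Substituting these into the three blocks, the base-base component vanishes identically (both ${\rm Ric}_B$ and ${\rm Hess}_f$ are zero), the mixed component vanishes by the general formula, and the fiber-fiber block collapses to ${\rm Ric}(V,W)={\rm Ric}_g(V,W)-(m-1)\frac{1}{t^2}\langle V,W\rangle$.

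The key cancellation is then immediate: for vertical lifts one has $\langle V,W\rangle=t^2g(V,W)$, so the factors of $t$ cancel and the fiber-fiber block reduces to ${\rm Ric}_g(V,W)-(m-1)g(V,W)$. Consequently the cone metric is Ricci-flat precisely when this last expression vanishes for all vertical $V,W$, that is, when ${\rm Ric}(g)=(m-1)g$, which is the asserted equivalence.

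The only genuine subtlety, and the step I would verify most carefully, is the bookkeeping inside O'Neill's formulas: keeping track that it is the \emph{fiber} dimension $n=m$ (not the total dimension $m+1$) entering the coefficients, and that the inner product in the fiber-fiber block is the warped metric $t^2g$ rather than $g$ itself. It is exactly this factor $t^2$ that cancels the $t^{-2}$ coming from $|\nabla f|^2/f^2$ to produce the clean Einstein constant $m-1$; misplacing either factor would spoil the equivalence.
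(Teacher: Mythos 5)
Your proof is correct and follows exactly the route the paper indicates: it states the proposition as a consequence of O'Neill's warped-product Ricci formulas applied to the base $(\R^+,dt^2)$, fiber $(M^m,g)$, and warping function $f(t)=t$, and your bookkeeping (fiber dimension $m$ in the coefficients, the cancellation of $t^2$ against $|\nabla f|^2/f^2$, and the identical vanishing of the base and mixed blocks) is accurate.
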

\begin{Proposition}\label{einsteinsincone}
Let $(M^m,g)$ be a Riemannian manifold of dimension $m$ with Einstein metric $g$ such that ${\rm Ric}(g)= (m-1)g$. Then the sin-cone metric $dt^2+\sin^2(t)h$ is Einstein 
with Einstein constant $m$.
\end{Proposition}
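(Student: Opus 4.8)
The plan is to recognize the sin-cone $(0,\pi)\times M$ with metric $\bar g = dt^2+\sin^2(t)\,h$ as a warped product with one-dimensional flat base $(0,\pi)$, fiber $(M^m,h)$, and warping function $f(t)=\sin(t)$, and then to apply the O'Neill formulas for the Ricci tensor of a warped product recalled in \cite{On}. Writing $n=\dim M=m$, $f'=\cos(t)$ and $f''=-\sin(t)$, these formulas split into three cases according to whether the arguments are tangent to the base or to the fiber, and the hypothesis enters as ${\rm Ric}(h)=(m-1)h$.

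First I would treat the base-base component. Since the base is one-dimensional and flat, its Ricci tensor vanishes and the Hessian of $f$ reduces to $f''\,dt^2$, so the O'Neill formula gives ${\rm Ric}(\bar g)(\partial_t,\partial_t)=-\frac{m}{f}f''=m$, which equals $m\,\bar g(\partial_t,\partial_t)$. The mixed base-fiber components vanish identically by the corresponding O'Neill identity, matching $m\,\bar g(\partial_t,V)=0$ for $V$ tangent to $M$.

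The fiber-fiber component is where the hypothesis and the geometry of $\sin(t)$ interact. Here O'Neill gives, for $V,W$ tangent to $M$,
\[
{\rm Ric}(\bar g)(V,W)={\rm Ric}(h)(V,W)-\left(\frac{f''}{f}+(m-1)\frac{(f')^2}{f^2}\right)f^2\,h(V,W).
\]
Substituting ${\rm Ric}(h)=(m-1)h$ together with $f=\sin(t)$, $f''=-\sin(t)$, $(f')^2=\cos^2(t)$, the bracket multiplied by $f^2$ becomes $-\sin^2(t)+(m-1)\cos^2(t)$, so the right-hand side collapses to $\big[(m-1)+\sin^2(t)-(m-1)\cos^2(t)\big]h(V,W)=m\sin^2(t)\,h(V,W)=m\,\bar g(V,W)$.

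The computation is essentially routine; the only point requiring care is fixing the sign and normalization conventions in the O'Neill formula (Hessian versus Laplacian, and the identification of the warping function $f=|F|=\sin(t)$) consistently, after which the Einstein condition with constant $m$ emerges precisely from the Pythagorean identity $\sin^2+\cos^2=1$ that cancels the otherwise $t$-dependent terms. Collecting the three cases then yields ${\rm Ric}(\bar g)=m\,\bar g$ on all of $(0,\pi)\times M$, as claimed.
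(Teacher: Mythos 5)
Your proof is correct and follows exactly the route the paper indicates: it treats the sin-cone as a warped product over the one-dimensional base $(0,\pi)$ with fiber $(M^m,h)$ and warping function $\sin(t)$, and applies the O'Neill Ricci formulas from \cite{On}, which is all the paper itself does (it only cites the reference without writing out the computation). The three-case verification, including the cancellation via $\sin^2+\cos^2=1$ in the fiber directions, checks out.
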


\section{Coupled structures and Supersymmetry}\label{cpdsusy}
In \cite{LT}, the authors considered the problem of finding necessary and sufficient conditions for $\mathcal{N} = 1$ compactification of (massive) IIA supergravity to four-dimensional 
anti-de Sitter space on manifolds endowed with an $\SU(3)$-structure. 
As a result, they obtained a set of constraints the intrinsic torsion forms of the $\SU(3)$-structure $(\omega,\psip)$ on the internal 
manifold have to satisfy, we recall them here brief{}ly. Supersymmetry equations and the Bianchi identities constrain the intrinsic torsion to lie in the space $\mathcal{W}_1^-\oplus
\mathcal{W}_2^-$, i.e., the only non-vanishing intrinsic torsion forms are $w_1^-$ and $w_2^-$. Furthermore, in absence of sources, the Bianchi identities provide a further constraint on 
the exterior derivative of $\Wd$
\begin{equation}\label{dw2proppsip}
d\Wd \propto \psip,
\end{equation}
and the norms of $w_1^-$ and $\Wd$ have to satisfy the following inequality \cite{KLT}
\begin{equation}\label{normcond}
3(w_1^-)^2 \geq  |\Wd|^2,
\end{equation}
where $|\cdot|$ denotes the norm with respect to the metric $h$ induced by the $\SU(3)$-structure.
In the massless limit, the solutions reduce to ${\rm AdS}_4\times N$, $N$ being a compact 6-manifold endowed with an $\SU(3)$-structure with torsion in 
$\mathcal{W}_1^-\oplus\mathcal{W}_2^-$ and for which \eqref{dw2proppsip} holds.
Moreover, it was observed in \cite{KLT} that the conditions \eqref{dw2proppsip} and \eqref{normcond} can be relaxed in the presence of sources. 

It is then worth studying from the mathematical point of view the properties of this kind of $\SU(3)$-structures. In what follows, we suppose that the manifold $N$ is connected.

First of all, we recall that $\SU(3)$-structures having torsion class $\mathcal{W}_1^-\oplus\mathcal{W}_2^-$ are known as {\it coupled} structures \cite{Sal} or {\it restricted half-flat} 
structures \cite{Lar} in literature. They can be defined as the subclass of half-flat structures having $w_3\equiv0$. In this case, $d\omega$ is proportional to $\psip$, the intrinsic torsion 
form $w_1^-$ is constant \cite{Raf} and has to be nonzero if we want the intrinsic torsion $\tau$ to belong to the class $\mathcal{W}_1^-\oplus\mathcal{W}_2^-$. 
Thus, if we let $c:= -\frac32w_1^- $, we have
\begin{equation}
\begin{array}{ccl}\label{hf}
d\omega &=& c\psip,\\
d\psip &=&0,\\
d\psim &=& -\frac23c\omega^2 - \Wd\W\omega.
\end{array}
\end{equation}
The 2-form $\Wd$ lies in the space $\Lambda^{1,1}_0(N)$, therefore it satisfies the following properties:
\begin{eqnarray}
\Wd\W\omega^2&=&0,\\
\Wd\W\psi_{\pm}&=&0,\\
\Wd\W\omega &=& -*\Wd.\label{starw2}
\end{eqnarray}
Using \eqref{starw2} and the expression of $d\psim$, it is easy to show that the 2-form $\Wd$ is co-closed, that is $\de\Wd = *d*\Wd = 0$.

\begin{remark}
Observe that if a manifold admits a coupled structure $(\omega,\psip)$ with coupled constant $c\in\R-\{0\}$ such that $d\omega=c\psip$, then one can choose a nonzero real constant 
$r$, define $\tilde\omega:= r^2\omega$, $\tilde\psi_+:=r^3\psip$ and obtain a new coupled structure $(\tilde\omega,\tilde\psi_+)$ with coupled constant 
$\tilde{c}=\frac{c}{r}$. In particular, it is always possible to find a coupled structure having positive coupled constant.
\end{remark}

From the results of \cite{BV}, we have that the scalar curvature of the metric $h$ induced by a coupled structure is given by
\begin{equation}\label{scalcoupled}
{\rm Scal}(h)=\frac{15}{2}(w_1^-)^2-\frac12|\Wd|^2.
\end{equation}
Moreover, the forms $\phi_1$ and $\phi_2$ appearing in the traceless part of the Ricci tensor are
\begin{equation}\renewcommand\arraystretch{1.4}\label{phicoupled}
\begin{array}{rcl}
\phi_1 &=&\frac14*(\Wd\W\Wd) +\frac14\de(w_1^-\psip),\\
\phi_2 &=& -2*J(d\Wd).
\end{array}
\end{equation}

Let us now focus on the condition \eqref{dw2proppsip}. It forces the proportionality constant between $d\Wd$ and $\psip$ to satisfy the following result.
\begin{Proposition}\label{dw2psip}
Let $(\omega,\psip)$ be a coupled $\SU(3)$-structure and suppose that $d\Wd$ is proportional to $\psip$, then it holds
\begin{equation*}
d\Wd = -\frac{|\Wd|^2}{4}\psip.
\end{equation*}
Moreover, the norm of $\Wd$ is constant.
\end{Proposition}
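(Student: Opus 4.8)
The plan is to write the assumed proportionality as $d\Wd=f\,\psip$ for a smooth function $f\in C^\infty(N)$ and to pin down $f$ by pairing the torsion form against $\psim$. The natural first instinct is to differentiate the algebraic identity $\Wd\W\psip=0$; but since $d\psip=0$ this only yields $d\Wd\W\psip=0$, which is vacuous because $f\,\psip\W\psip=0$ for degree reasons. The useful identity is therefore $\Wd\W\psim=0$: differentiating it produces a genuine scalar relation once I substitute the structure equation for $d\psim$ from \eqref{hf}.

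Concretely, I would expand $0=d(\Wd\W\psim)=d\Wd\W\psim+\Wd\W d\psim$, the Leibniz sign being $+$ since $\Wd$ has even degree. For the first term, $d\Wd\W\psim=f\,\psip\W\psim$, and the normalization condition gives $\psip\W\psim=\frac23\omega^3=4\,dV_h$, so this contributes $4f\,dV_h$. For the second term I substitute $d\psim=-\frac23c\,\omega^2-\Wd\W\omega$: the term $\Wd\W\omega^2$ vanishes, while $\Wd\W\Wd\W\omega=-\Wd\W*\Wd=-|\Wd|^2\,dV_h$ by \eqref{starw2} and the definition of the Hodge star, so this term contributes $+|\Wd|^2\,dV_h$. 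Collecting the two contributions yields $4f+|\Wd|^2=0$, i.e.\ $f=-\frac14|\Wd|^2$, which is the claimed formula. The main thing to watch here is the sign bookkeeping—both the signs hidden in $\psip\W\psim$ and in $\Wd\W\omega=-*\Wd$—as these are precisely what fix the coefficient $-\frac14$ rather than $+\frac14$.

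For the final assertion that $|\Wd|$ is constant, I would use $d^2=0$. Since $d\psip=0$, differentiating $d\Wd=f\,\psip$ gives $0=d(f\,\psip)=df\W\psip$. It then remains to show that wedging a $1$-form with $\psip$ is injective, i.e.\ that $df\W\psip=0$ forces $df=0$; this is a pointwise linear-algebra fact that I would verify in an adapted frame, where $\psip=e^{135}-e^{146}-e^{236}-e^{245}$ and the six $4$-forms $e^i\W\psip$ are supported on disjoint basis monomials, hence linearly independent, so each coefficient of $df$ must vanish. Consequently $df=0$, and since $N$ is connected $f$ is constant; as $f=-\frac14|\Wd|^2$, the norm $|\Wd|$ is constant as well. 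The only mildly delicate point is this injectivity of $\alpha\mapsto\alpha\W\psip$, which ultimately reflects that $\psip$ is the real part of a nowhere-vanishing decomposable $(3,0)$-form.
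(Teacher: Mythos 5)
Your proposal is correct and follows essentially the same route as the paper: differentiate $\Wd\W\psim=0$, substitute $d\psim$ from the structure equations and use $\Wd\W\omega=-*\Wd$ to get $f=-\tfrac14|\Wd|^2$, and deduce constancy from $df\W\psip=0$ via $d^2\Wd=0$. The only difference is cosmetic (the paper proves constancy first, and leaves the injectivity of $\alpha\mapsto\alpha\W\psip$ implicit, which you verify in an adapted frame).
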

\begin{proof}
First of all, observe that if $d\Wd=k\psip$ for some function $k\in C^\infty(N)$, then $k$ has to be constant. Indeed, taking the exterior derivatives of both sides we get
$$dk\W\psip=0,$$
which implies $dk=0$.
Now suppose that $d\Wd=k\psip$. Then starting from $\Wd\W\psim=0$, taking the exterior derivatives of both sides and using the previous identities we have
$$\renewcommand\arraystretch{1.4}
\begin{array}{lcccl}
0 &=& d\Wd\W\psim+\Wd\W d\psim &=& k\psip\W\psim-\Wd\W\Wd\W\omega\\
   &=&\frac23k\omega^3 +\Wd\W*\Wd &=&\frac23k\omega^3 +|\Wd|^2*1\\
   &=&\frac23k\omega^3 +|\Wd|^2\frac16\omega^3. & &
\end{array}
$$
Thus $k = -\frac{|\Wd|^2}{4}.$ From the observation made at the beginning of the proof we also get that $|\Wd|$ is constant.
\end{proof}

From Proposition \ref{dw2psip} and the fact that $w_1^-$ is constant, we obtain the following constraint.
\begin{Proposition}\label{scalconst}
Let $(\omega,\psip)$ be a coupled $\SU(3)$-structure such that $d\Wd$ is proportional to $\psip$. Then the scalar curvature of the metric induced by the coupled structure is constant.
\end{Proposition}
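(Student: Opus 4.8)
The plan is to read the conclusion directly off the scalar curvature formula \eqref{scalcoupled}, which for a coupled structure expresses ${\rm Scal}(h)$ as $\frac{15}{2}(w_1^-)^2 - \frac12|\Wd|^2$. Since this is merely an algebraic combination of the two pointwise scalar quantities $(w_1^-)^2$ and $|\Wd|^2$, it suffices to show that each of these functions is constant on the connected manifold $N$; constancy of the combination then follows at once.

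First I would invoke the fact, already recalled in the text preceding the statement, that for any coupled $\SU(3)$-structure the torsion function $w_1^-$ is constant. Hence $(w_1^-)^2$ is constant. Next, the standing hypothesis that $d\Wd$ is proportional to $\psip$ lets me apply Proposition \ref{dw2psip}, whose final assertion is precisely that the norm $|\Wd|$ is constant; consequently $|\Wd|^2$ is constant as well. Substituting these two facts into \eqref{scalcoupled} shows that ${\rm Scal}(h)$ equals a fixed real number, which is the claim.

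There is no real obstacle here: the proposition is essentially a corollary that bundles together the constancy of $w_1^-$ (a consequence of the coupled condition alone) with the constancy of $|\Wd|$ (established in Proposition \ref{dw2psip}). The only conceptual point worth flagging is that both inputs are genuinely required, and that the constancy of $|\Wd|$ is exactly the place where the extra assumption $d\Wd \propto \psip$ enters — without it the norm $|\Wd|^2$ could in principle vary over $N$, so the proportionality hypothesis is what upgrades \eqref{scalcoupled} from a pointwise identity to a constant value for the scalar curvature.
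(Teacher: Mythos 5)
Your argument is exactly the paper's: substitute the constancy of $w_1^-$ (which holds for any coupled structure) and the constancy of $|\Wd|$ (from Proposition \ref{dw2psip}, using the hypothesis $d\Wd \propto \psip$) into the scalar curvature formula \eqref{scalcoupled}. The proposal is correct and matches the intended proof.
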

\begin{proof} Consider the expression \eqref{scalcoupled} of the scalar curvature of $h$  and conclude using the fact that both $w_1^-$ and $|\Wd|$ are constant.\end{proof}

Consider now condition \eqref{normcond}, this implies a further constraint on the scalar curvature.
\begin{Proposition}\label{scalarpositive}
Let $(\omega,\psip)$ be a coupled $\SU(3)$-structure whose non-vanishing intrinsic torsion forms satisfy $3(w_1^-)^2 \geq |\Wd|^2$. 
Then the scalar curvature of the metric induced by the coupled structure is positive. Moreover, it is also constant if $d\Wd$ is proportional to $\psip$.
\end{Proposition}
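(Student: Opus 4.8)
The plan is to read the conclusion off directly from the closed expression \eqref{scalcoupled} for the scalar curvature, namely ${\rm Scal}(h)=\frac{15}{2}(w_1^-)^2-\frac12|\Wd|^2$, combined with the hypothesis $3(w_1^-)^2 \geq |\Wd|^2$. First I would rewrite the norm bound as $|\Wd|^2 \leq 3(w_1^-)^2$ and substitute it into \eqref{scalcoupled}, obtaining the pointwise lower bound ${\rm Scal}(h) \geq \frac{15}{2}(w_1^-)^2 - \frac32(w_1^-)^2 = 6(w_1^-)^2$.

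The second ingredient is the fact, already recorded just before \eqref{hf}, that a genuine coupled structure with torsion class exactly $\mathcal{W}_1^-\oplus\mathcal{W}_2^-$ forces $w_1^-$ to be a nonzero constant. Hence $6(w_1^-)^2 > 0$, and the chain of inequalities upgrades to ${\rm Scal}(h) > 0$ everywhere on $N$. I would emphasize that the strictness originates solely from $w_1^- \neq 0$, not from the norm inequality, which is allowed to be an equality.

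For the \emph{moreover} clause no additional computation is required: under the extra hypothesis that $d\Wd$ is proportional to $\psip$, Proposition \ref{scalconst} already establishes that ${\rm Scal}(h)$ is constant, so I would simply invoke it.

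Since every step is either a direct substitution into \eqref{scalcoupled} or a citation of a property proved earlier in the excerpt, I do not expect any real obstacle; the only point deserving care is to keep track of how the nonvanishing of the constant $w_1^-$ promotes the weak inequality on ${\rm Scal}(h)$ to a strict one.
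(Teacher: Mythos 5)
Your proof is correct and follows essentially the same route as the paper: a direct substitution of the hypothesis $3(w_1^-)^2\geq|\Wd|^2$ into \eqref{scalcoupled}, followed by an appeal to Proposition \ref{scalconst} for the constancy claim. The only cosmetic difference is the direction of the substitution — you obtain the lower bound $6(w_1^-)^2$ and use $w_1^-\neq 0$ for strictness, whereas the paper obtains $2|\Wd|^2$; your variant is if anything slightly cleaner, since $w_1^-$ is a nonzero constant everywhere while positivity of $|\Wd|^2$ at every point is less immediate.
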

\begin{proof}
Using the expression of the scalar curvature of a coupled structure and the inequality $3(w_1^-)^2 \geq |\Wd|^2$ we get
$${\rm Scal}(h)=\frac{15}{2}(w_1^-)^2-\frac12|\Wd|^2 \geq 2|\Wd|^2>0.$$
Moreover, if $d\Wd$ is proportional to $\psip$, then the scalar curvature is constant by Proposition \ref{scalconst}. 
\end{proof}

It is also easy to characterize the coupled structures having $d\Wd$ proportional to $\psip$ and inducing an Einstein metric:
\begin{Proposition}\label{coupledeinsteindw2}
Let $(\omega,\psip)$ be a coupled $\SU(3)$-structure such that $d\Wd$ is proportional to $\psip$. Then the induced metric $h$ is Einstein if and only if the following identity holds
$$*(\Wd \W \Wd) = w_1^-\Wd - \frac{|\Wd|^2}{3}\omega.$$
\end{Proposition}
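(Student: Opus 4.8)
The plan is to use the expression for the traceless Ricci tensor recalled from \cite{BV}, namely ${\rm Ric}^0(h) = \iota^{-1}(E_1(\phi_1)) + \gamma^{-1}(E_2(\phi_2))$. Since $\iota$ and $\gamma$ are isomorphisms onto the complementary modules $\Lambda^{1,1}_0(N)$ and $\Lambda^{2,1}_0(N)$, the metric $h$ is Einstein (that is, ${\rm Ric}^0(h)=0$) if and only if both $E_1(\phi_1)=0$ and $E_2(\phi_2)=0$. I would compute these two projections separately, using the explicit forms of $\phi_1$ and $\phi_2$ in \eqref{phicoupled} together with the hypothesis, which by Proposition \ref{dw2psip} reads $d\Wd = -\frac{|\Wd|^2}{4}\psip$.

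First I would dispatch the $\phi_2$ term and show it contributes nothing. Substituting $d\Wd = -\frac{|\Wd|^2}{4}\psip$ into $\phi_2 = -2*J(d\Wd)$ and using $J\psip=\psim$ together with the Hodge relations $*\psip=\psim$ and $*\psim=-\psip$, one finds that $\phi_2$ is a constant multiple of $\psip$. It then suffices to check that $E_2(\psip)=0$: in the defining formula for $E_2$ the first correction term drops out because $\psim\W\omega=0$, the last because $\psip\W\psip=0$, and the remaining term is evaluated from the normalization $\psip\W\psim=\frac23\omega^3=4dV_h$, giving exactly $-\psip$ and hence $E_2(\psip)=\psip-\psip=0$. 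Thus $E_2(\phi_2)=0$ holds automatically under the hypothesis, and the Einstein condition collapses to $E_1(\phi_1)=0$.

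The core of the argument is the analysis of $E_1(\phi_1)$. Since $w_1^-$ is constant, I would first rewrite $\de(w_1^-\psip)=w_1^-\de\psip$ by passing the codifferential to $d\psim$ and using the coupled structure equations \eqref{hf} for $d\psim$, the relation $*(\Wd\W\omega)=-\Wd$ coming from \eqref{starw2}, and $*(\omega^2)=2\omega$. This should yield $\de(w_1^-\psip)=-2(w_1^-)^2\omega-w_1^-\Wd$, so that $\phi_1=\frac14*(\Wd\W\Wd)-\frac12(w_1^-)^2\omega-\frac14w_1^-\Wd$. Now I apply $E_1$ termwise: $E_1(\omega)=0$ because $\omega$ is pure trace, and $E_1(\Wd)=\Wd$ because $\Wd\in\Lambda^{1,1}_0(N)$ is already primitive.

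The main obstacle is the remaining term $E_1(*(\Wd\W\Wd))$, which requires extracting the $\omega$-component of the $(1,1)$-form $*(\Wd\W\Wd)$. I would compute this trace scalar from the pairing $\langle *(\Wd\W\Wd),\omega\rangle=\frac12\langle\Wd\W\Wd,\omega^2\rangle$ and the key identity $\Wd\W\Wd\W\omega=-|\Wd|^2dV_h$, which follows at once from $\Wd\W\omega=-*\Wd$ in \eqref{starw2}. This gives trace coefficient $-\frac{|\Wd|^2}{3}$ (using $|\omega|^2=3$), hence $E_1(*(\Wd\W\Wd))=*(\Wd\W\Wd)+\frac{|\Wd|^2}{3}\omega$. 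Assembling the three pieces, $E_1(\phi_1)=\frac14*(\Wd\W\Wd)+\frac{|\Wd|^2}{12}\omega-\frac14w_1^-\Wd$, and setting this to zero and clearing the factor $\frac14$ yields precisely $*(\Wd\W\Wd)=w_1^-\Wd-\frac{|\Wd|^2}{3}\omega$, as claimed. The only delicate points are getting the sign conventions in the codifferential right and confirming that $*(\Wd\W\Wd)$ is genuinely of type $(1,1)$, so that $E_1$ reduces to removing its trace.
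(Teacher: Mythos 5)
Your proposal is correct and follows essentially the same route as the paper: both reduce the Einstein condition to $E_1(\phi_1)=0$ after observing that $\phi_2$ becomes proportional to $\psip$ (hence $E_2(\phi_2)=0$), and both arrive at the same expressions $\phi_1 = \frac14*(\Wd\W\Wd)-\frac12(w_1^-)^2\omega-\frac14 w_1^-\Wd$ and $E_1(\phi_1)=\frac14*(\Wd\W\Wd)-\frac14 w_1^-\Wd+\frac{1}{12}|\Wd|^2\omega$. You merely supply more of the intermediate computations (the codifferential $\de(w_1^-\psip)$, the verification $E_2(\psip)=0$, and the trace extraction via $\Wd\W\Wd\W\omega=-|\Wd|^2\,dV_h$) than the paper records explicitly, and these all check out.
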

\begin{proof}
Recall that a Riemannian metric $h$ is Einstein if and only if ${\rm Ric}^0(h) = 0$. We know that
$${\rm Ric}^0(h) = \iota^{-1}(E_1(\phi_1)) + \gamma^{-1}(E_2(\phi_2)),$$
where $\phi_1$ and $\phi_2$ for a coupled structure are given in \eqref{phicoupled}. Now, using the fact that $d\Wd$ is proportional to $\psip$, one gets that also $\phi_2$ is 
proportional to $\psip$. Thus $E_2(\phi_2)=0$, since $\psip$ belongs to a subspace of $\Lambda^3(N)$ which is disjoint from $\Lambda^{2,1}_0(N)$. Moreover, 
$$\phi_1 = \frac14*(\Wd\W\Wd) -\frac12(w_1^-)^2~\omega -\frac14w_1^-\Wd$$
and
$$E_1(\phi_1) = \frac14*(\Wd \W \Wd)  -\frac14w_1^-\Wd + \frac{1}{12}|\Wd|^2\omega.$$
Therefore, ${\rm Ric}^0(h) = \iota^{-1}(E_1(\phi_1))$ is zero if and only if $E_1(\phi_1)$ is zero, and from this the assertion follows.
\end{proof}

\section{Examples}\label{exsec} 
In this section, we examine some examples of 6-manifolds admitting an $\SU(3)$-structure satisfying (all or in part) the properties discussed in Section \ref{cpdsusy}. 

\subsection{Nilmanifolds}
We recall here the definition of a nilmanifold and some useful properties. 
\begin{Definition}
Let $G$ be a connected, simply connected, nilpotent Lie group and $\Gamma$ a cocompact discrete subgroup. The compact quotient manifold $G/\Gamma$ is called {\it nilmanifold}.
\end{Definition}
In the general case, every left invariant tensor on $G$ passes to the quotient defining an invariant tensor on the nilmanifold $G/\Gamma$. 
Moreover, all the 34 six-dimensional nilpotent Lie algebras existing up to isomorphisms \cite{Mag} satisfy the following result
\begin{Proposition}[\cite{Mal}]
Let $\mathfrak{g}$ be a nilpotent Lie algebra and suppose there exists a basis of it such that the structure constants determined with respect to this basis are rational numbers. 
Then, denoted by $G$ the simply connected nilpotent Lie group whose Lie algebra is $\mathfrak{g}$, there exists a discrete subgroup $\Gamma$ of $G$ such that $G/\Gamma$ is a 
nilmanifold.
\end{Proposition}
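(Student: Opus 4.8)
The plan is to construct the lattice $\Gamma$ as the image under the exponential map of a suitable integral lattice in $\cg$. The key structural input is that for a connected, simply connected nilpotent Lie group the exponential map $\exp:\cg\to G$ is a diffeomorphism, and the group product, transported to $\cg$ via $\exp$, is governed by the Baker--Campbell--Hausdorff (BCH) series
$$\log(\exp X\cdot\exp Y)=X+Y+\tfrac12[X,Y]+\tfrac1{12}\bigl([X,[X,Y]]-[Y,[X,Y]]\bigr)+\cdots,$$
which terminates after finitely many terms because $\cg$ is nilpotent. Consequently the group law is polynomial, with universal rational coefficients whose denominators are bounded in terms of the nilpotency class.

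First I would fix a basis $(X_1,\dots,X_n)$ of $\cg$ with rational structure constants $c_{ij}^k\in\mathbb{Q}$, chosen so as to be adapted to the lower central series of $\cg$ (a strong Malcev basis); this is possible because the terms of the central series are cut out by rational equations in the given basis. Writing $\cg_{\mathbb{Q}}:=\operatorname{span}_{\mathbb{Q}}(X_1,\dots,X_n)$, the BCH product of two elements of $\exp(\cg_{\mathbb{Q}})$ again lies in $\exp(\cg_{\mathbb{Q}})$, since the iterated brackets occurring in BCH have rational coordinates in this basis and the BCH coefficients are themselves rational.

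The heart of the argument is to pass from the $\mathbb{Q}$-structure $\cg_{\mathbb{Q}}$ to a genuine $\mathbb{Z}$-lattice $L\subset\cg$ that is closed under the BCH product and under inversion: then $\Gamma:=\exp(L)$ is a subgroup of $G$, and because $\exp$ is a diffeomorphism carrying the full-rank discrete lattice $L\subset\cg\cong\mathbb{R}^n$ onto $\Gamma$, the subgroup $\Gamma$ is discrete and the quotient $G/\Gamma$ is compact (a compact fundamental domain for the $\Gamma$-action in $G$ is given by the $\exp$-image of a fundamental parallelepiped for $L$). To produce such an $L$ I would rescale the basis, replacing each $X_i$ by $NX_i$ for a large integer $N$; this multiplies the structure constants and, step by step along the central series, allows one to absorb the finitely many denominators occurring in the BCH expansion, so that $L=\bigoplus_i\mathbb{Z}\,NX_i$ becomes closed under multiplication.

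The main obstacle is precisely this closure step: showing that a single integer $N$ simultaneously clears all the denominators arising from iterated brackets at every stage of the central series. I expect to handle it by induction on the nilpotency class. The abelian case is immediate, with $\Gamma=\mathbb{Z}^n$. For the inductive step one passes to $\bar G:=G/Z(G)$, whose Lie algebra $\cg/\cz$ (with $\cz$ the center of $\cg$) inherits a rational structure and has strictly smaller nilpotency class, so by induction $\bar G$ admits a lattice $\bar\Gamma$. One then lifts $\bar\Gamma$ through the central extension $1\to Z(G)\to G\to\bar G\to 1$: rationality of the defining data forces the associated extension cocycle to be rational-valued, so that after replacing $\bar\Gamma$ by a finite-index sublattice if necessary a compatible lift exists, and combining it with a lattice in the central factor $Z(G)\cong\mathbb{R}^k$ yields the desired $\Gamma$ in $G$.
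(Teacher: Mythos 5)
The paper does not prove this statement: it is quoted as Malcev's classical criterion from \cite{Mal} and used as a black box, so there is no internal proof to compare yours against. On its own merits, your outline is the standard argument for the existence direction of Malcev's theorem (as in Raghunathan's or Corwin--Greenleaf's books): exponential coordinates, polynomiality and rationality of the Baker--Campbell--Hausdorff product on a nilpotent algebra, a rational basis adapted to the lower central series, and a rescaling to clear denominators. Two remarks. First, the step you single out as the main obstacle is easier than you fear: if $D$ clears the denominators of the structure constants and $M$ those of the finitely many BCH coefficients up to the nilpotency class $c$, then for elements of $L_N=\bigoplus_i\mathbb{Z}\,NX_i$ a $k$-fold bracket lies in $(N/D)^{k-1}L_N$, so the degree-$k$ BCH term lies in $M^{-1}(N/D)^{k-1}L_N$ and the single choice $N=MD$ already makes $L_N$ closed under the BCH product; no induction or central extension is needed for closure. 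Second, the one place where your argument as written is genuinely too quick is cocompactness: $\Gamma=\exp(L)$ acts on $G$ by group multiplication, which under $\log$ is the BCH operation rather than vector addition, so a fundamental parallelepiped for the additive lattice $L$ is not automatically a fundamental domain for the $\Gamma$-action. This is repaired either by your own inductive scheme (the fibration of $G/\Gamma$ over $(G/Z(G))/\bar\Gamma$ with compact fibre $Z(G)/(Z(G)\cap\Gamma)$ forces compactness), or more directly by exploiting the upper-triangular form of the group law in a basis adapted to the central series to solve $g=\exp(p)\gamma$ coordinate by coordinate with $p$ confined to a fixed compact box. With either repair the proof is complete and coincides, up to presentation, with the classical one.
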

It then follows that there is a $1-1$ correspondence between invariant $\SU(3)$-structures $(\omega,\psip)$ on a nilmanifold and pairs $(\omega,\psip)$ defining an $\SU(3)$-structure 
on its nilpotent Lie algebra. This allows to work only with $\SU(3)$-structures defined on nilpotent Lie algebras. 

Since every nilpotent Lie group is solvable, the following result by Milnor holds in the case we are considering.
\begin{Theorem}[\cite{Mil}]
Let $G$ be a solvable Lie group. Then every left invariant metric on $G$ is either flat or has strictly negative scalar curvature.
\end{Theorem}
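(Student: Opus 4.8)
The statement to be proved is Milnor's theorem, so the plan is to recover it from the explicit formulas for the curvature of a left-invariant metric. The plan is to work entirely at the level of the Lie algebra $\cg$ of $G$, equipped with the inner product $\langle\cdot,\cdot\rangle$ determined by the left-invariant metric at the identity, since a left-invariant metric is determined by this single inner product. First I would fix an orthonormal basis $(e_1,\dots,e_n)$ and set $\alpha_{ijk}=\langle[e_i,e_j],e_k\rangle$. From the Koszul formula the Levi-Civita connection, and hence the whole curvature tensor, becomes an explicit quadratic expression in the $\alpha_{ijk}$; tracing twice yields Milnor's scalar curvature formula
\[ \mathrm{Scal}=-\frac14\sum_{i,j,k}\alpha_{ijk}^2-\frac12\sum_i\mathrm{tr}\big(\mathrm{ad}_{e_i}^2\big)-|H|^2, \]
where $H\in\cg$ is the mean curvature vector defined by $\langle H,X\rangle=\mathrm{tr}(\mathrm{ad}_X)$. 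I would record this identity and note that the first and third summands are manifestly $\le 0$, so the entire difficulty sits in the middle term $-\tfrac12\sum_i\mathrm{tr}(\mathrm{ad}_{e_i}^2)=-\tfrac12\,\mathrm{tr}_{\langle\cdot,\cdot\rangle}B$, the metric trace of the Killing form $B$, which is positive precisely when $B$ has negative trace.

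Next I would bring in solvability. The naive hope that each summand is nonpositive already fails: on the group of orientation-preserving rigid motions of the plane one has $\mathrm{tr}(\mathrm{ad}_{e_i}^2)<0$, so the middle term is genuinely positive, yet the induced metric is flat. Hence the point is not term-by-term positivity but that the positive contribution is always dominated. Here solvability enters through Lie's theorem: the operators $\mathrm{ad}_X$ are simultaneously triangularizable over $\C$, so $\mathrm{tr}(\mathrm{ad}_X^2)$ is the sum of squares of the complex eigenvalues of $\mathrm{ad}_X$, and a negative value can only come from purely imaginary eigenvalues, i.e.\ from the skew-symmetric ``elliptic'' part of the adjoint action. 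I would make this quantitative by splitting each $\mathrm{ad}_{e_i}$ into its symmetric and skew parts relative to $\langle\cdot,\cdot\rangle$, noting $\sum_i\|\mathrm{ad}_{e_i}\|^2=\sum_{i,j,k}\alpha_{ijk}^2$, and then using that for solvable $\cg$ the derived algebra $[\cg,\cg]\subsetneq\cg$ acts by nilpotent operators. The goal is the inequality
\[ -\tfrac12\,\mathrm{tr}_{\langle\cdot,\cdot\rangle}B\ \le\ \tfrac14\sum_{i,j,k}\alpha_{ijk}^2+|H|^2, \]
which is exactly the assertion $\mathrm{Scal}\le 0$.

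Finally I would analyze the equality case. If $\mathrm{Scal}=0$, the inequality above is saturated, and I would show this forces each obstruction to vanish separately: the mean curvature term disappears, the surviving symmetric parts of the $\mathrm{ad}_{e_i}$ must cancel against the bracket-norm, and one is left with an orthogonal splitting $\cg=\mathfrak b\oplus[\cg,\cg]$ in which $[\cg,\cg]$ is an abelian ideal, $\mathfrak b$ is an abelian subalgebra, and every $\mathrm{ad}_b$ with $b\in\mathfrak b$ is skew-symmetric. This is precisely Milnor's structural characterization of Lie algebras admitting a flat left-invariant metric; substituting it back into the \emph{full} curvature tensor (not merely the scalar) gives $R\equiv 0$. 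Thus for solvable $G$ the scalar curvature is either strictly negative or the metric is flat, with nothing in between.

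The step I expect to be the main obstacle is the domination inequality in the second paragraph. Since $\mathrm{Scal}$ is not a sum of nonpositive terms, its sign cannot be read off by inspection; the crux is to prove that the positive contribution of the elliptic part of the adjoint representation is always absorbed by the nonpositive bracket-norm and mean-curvature terms, and to do so sharply enough that equality forces exactly the flat structure. This domination is where solvability is indispensable: the same formula with a symmetric part removed (as for $\mathfrak{su}(2)$) lets the Killing term overwhelm the rest and produces positive scalar curvature.
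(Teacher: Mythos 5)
First, a point of comparison: the paper does not prove this statement at all --- it is quoted with a citation to Milnor's 1976 article and used as a black box --- so there is no internal proof to measure your argument against, and the proposal has to stand on its own.

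On its own terms it has a genuine gap. Your scalar curvature formula $\mathrm{Scal}=-\frac14\sum_{i,j,k}\alpha_{ijk}^2-\frac12\sum_i\mathrm{tr}(\mathrm{ad}_{e_i}^2)-|H|^2$ is correct (it checks out on bi-invariant metrics and on the flat metric of the plane motion group), and your diagnosis that the only possibly positive contribution is $-\frac12\,\mathrm{tr}_{\langle\cdot,\cdot\rangle}B$ is also correct. But the ``domination inequality'' $-\frac12\,\mathrm{tr}_{\langle\cdot,\cdot\rangle}B\le\frac14\sum_{i,j,k}\alpha_{ijk}^2+|H|^2$ is, by your own formula, \emph{identical} to the assertion $\mathrm{Scal}\le 0$; restating the theorem as an inequality, calling it ``the goal,'' and flagging it as ``the main obstacle'' is not a proof of it. The two tools you invoke do not close it as stated: Lie's theorem only tells you that a negative $\mathrm{tr}(\mathrm{ad}_X^2)$ must come from non-real eigenvalues, and the decomposition $\mathrm{ad}_{e_i}=S_i+K_i$ merely rewrites the claim as $\frac14\sum_i\|K_i\|^2\le\frac34\sum_i\|S_i\|^2+|H|^2$, with no argument for why solvability forces the skew parts to be absorbed by the symmetric parts and the mean curvature vector --- which is precisely where all the content of the theorem lives (the same inequality fails for $\mathfrak{su}(2)$, as you note). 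The equality analysis in your third paragraph inherits the gap: you cannot read off which terms saturate an inequality you have not established, so the claimed orthogonal splitting $\mathfrak{g}=\mathfrak{b}\oplus[\mathfrak{g},\mathfrak{g}]$ with $[\mathfrak{g},\mathfrak{g}]$ an abelian ideal and each $\mathrm{ad}_b$ skew-adjoint is asserted rather than derived. To complete the argument one must genuinely use the solvable structure --- for instance, work with the orthogonal complement of the derived algebra, on which the adjoint operators act nilpotently so that their squared traces can be compared term by term with the bracket norm, together with a rigidity analysis of the equality case --- and none of that is carried out in the proposal.
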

In particular, if a nilpotent Lie algebra is endowed with an inner product $h$, then ${\rm Scal}(h)$ is non-positive. 
As a consequence, using Proposition \ref{scalarpositive} it is immediate to show the
\begin{Proposition}
There are no six-dimensional nilmanifolds admitting an invariant coupled structure satisfying the condition $3(w_1^-)^2 \geq  |\Wd|^2.$
\end{Proposition}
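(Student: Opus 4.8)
The plan is to argue by contradiction, playing off two incompatible sign constraints on the scalar curvature of the induced metric. Suppose, for contradiction, that a six-dimensional nilmanifold $N=G/\Gamma$ carries an invariant coupled $\SU(3)$-structure $(\omega,\psip)$ whose non-vanishing intrinsic torsion forms satisfy $3(w_1^-)^2 \geq |\Wd|^2$.

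First I would transfer the problem to the Lie algebra level. By the $1$--$1$ correspondence recalled above, the invariant structure $(\omega,\psip)$ is determined by a pair $(\omega,\psip)$ on the nilpotent Lie algebra $\mathfrak{g}$ of $G$, and the induced metric $h(\cdot,\cdot)=\omega(\cdot,J\cdot)$ is the restriction to $\mathfrak{g}$ of a left-invariant metric on $G$. Since the covering projection $G\rightarrow G/\Gamma$ is a local isometry, the Riemannian scalar curvature of $h$ on $N$ agrees with that of the corresponding left-invariant metric on $G$, so it suffices to work at the level of the inner product on $\mathfrak{g}$.

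Next I would invoke the two results that do the real work. On the one hand, Proposition \ref{scalarpositive} applies directly to $(\omega,\psip)$ and forces ${\rm Scal}(h)>0$. On the other hand, since $G$ is nilpotent it is in particular solvable, so Milnor's theorem applies to the left-invariant metric $h$: either $h$ is flat, in which case ${\rm Scal}(h)=0$, or ${\rm Scal}(h)<0$. In both cases ${\rm Scal}(h)\leq0$, contradicting ${\rm Scal}(h)>0$. This contradiction rules out the existence of such a structure.

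There is no genuine analytic obstacle here; the content of the argument is simply the confrontation of these opposite sign constraints on the same invariant. The only points that require care are verifying that the hypotheses of Proposition \ref{scalarpositive} are truly met---in particular that the structure is coupled with $w_1^-\neq0$, which is forced by $\tau\in\mathcal{W}_1^-\oplus\mathcal{W}_2^-$ and guarantees via \eqref{scalcoupled} that the positivity is strict---and that the scalar curvature obtained from the torsion forms coincides with the Riemannian scalar curvature of the left-invariant metric to which Milnor's theorem is applied, which is exactly what the local isometry above provides.
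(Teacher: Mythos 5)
Your proof is correct and follows essentially the same route as the paper: it combines Proposition \ref{scalarpositive} (positivity of the scalar curvature under the norm condition) with Milnor's theorem on solvable Lie groups to obtain contradictory sign constraints on ${\rm Scal}(h)$. The extra care you take about $w_1^-\neq0$ and the passage from the nilmanifold to the left-invariant metric on $G$ is implicit in the paper but entirely consistent with its argument.
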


Thus, we can only look for nilpotent Lie algebras endowed with a coupled structure $(\omega,\psip)$ having $d\Wd$ proportional to $\psip$. 
In \cite{FR}, we showed that among the 34 non-isomorphic six-dimensional nilpotent Lie algebras there are only two of them admitting a coupled structure, 
we recall the result here.
\begin{Proposition}\label{nilpclass}
Let $\mathfrak{g}$ be a six-dimensional, non-abelian, nilpotent Lie algebra endowed with a coupled $\SU(3)$-structure $(\omega,\psip)$. Then $\mathfrak{g}$ 
is isomorphic to one of the following nilpotent Lie algebras
\begin{eqnarray}
\mathfrak{I} &=& (0,0,0,0,e^{14}+e^{23},e^{13}-e^{24}),\\
\mathfrak{N} &=& \left(0,0,0,e^{13},e^{14}+e^{23},e^{13}-e^{15}-e^{24}\right).
\end{eqnarray}
\end{Proposition}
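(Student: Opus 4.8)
The plan is to classify all six-dimensional, non-abelian, nilpotent Lie algebras admitting a coupled $\SU(3)$-structure by exploiting two strong structural constraints: first, that a coupled structure is in particular half-flat, so both $\psip$ and $\omega^2$ must be closed; and second, the specific proportionality $d\omega = c\,\psip$ with $c \neq 0$. The key observation is that on a nilpotent Lie algebra the exterior differential increases a natural filtration (the ascending central series of $\cg^*$), so one can control which forms can appear as $d\omega$. First I would invoke Nomizu-type / Salamon's results: on a nilmanifold, $b_1 \geq 2$ and, more to the point, the space of closed invariant 2-forms and the image of $d$ on 2-forms are heavily restricted by the step and by the structure constants. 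I would fix an adapted coframe $(e^1,\dots,e^6)$ for the $\SU(3)$-structure so that $\omega$ and $\psip$ take the normal forms given in the preliminaries, and then translate $d\omega = c\,\psip$, $d\psip = 0$ into a system of equations on the structure constants.

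The main engine is that for nilpotent $\cg$ the 3-form $\psip$, being $d$-exact up to the constant $c$ (since $c\,\psip = d\omega$), is in particular \emph{exact}, and $\psip$ has a very rigid pointwise $\GL(6)$-stabilizer equal to $\SL(3,\C)$. So the equation $d\omega = c\,\psip$ says that the image of $d$ acting on the single 2-form $\omega$ must be a stable (in Hitchin's sense) 3-form of the specific algebraic type. The closedness $d\psip=0$ together with $d\omega^2 = 2\,d\omega\wedge\omega = 2c\,\psip\wedge\omega = 0$ (which holds automatically by compatibility $\psip\wedge\omega = 0$, so the half-flat conditions are partly forced) gives further linear constraints. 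I would run this through the Salamon classification machinery: go through the list of 34 nilpotent Lie algebras grouped by their dimension of $[\cg,\cg]$ and by nilpotency step, and for each candidate decide whether there is any choice of adapted coframe making $\psip$ exact with the exactness witnessed by a \emph{nondegenerate} $\omega$ of the right signature (so that $h(\cdot,\cdot)=\omega(\cdot,J_{\psip}\cdot)$ is positive definite and $\lambda(\psip)<0$).

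The decisive reductions are: (i) nondegeneracy of $\omega$ forces the structure constants to be rich enough that $\psip = c^{-1} d\omega$ spans a full stable orbit, which eliminates all algebras whose $d$ on 2-forms lands in too small a space; and (ii) the requirement $c \neq 0$ rules out the cases where the only half-flat structures are the closed (Calabi--Yau) ones, i.e. those with $d\omega = 0$. I expect that after imposing these the surviving algebras are exactly $\mathfrak{I}$ and $\mathfrak{N}$, for which one then exhibits an explicit adapted coframe and checks directly that $(\omega,\psip)$ with $\omega = e^{12}+e^{34}+e^{56}$ and $\psip = \Re\big((e^1+ie^2)\wedge(e^3+ie^4)\wedge(e^5+ie^6)\big)$, after a suitable linear change of coframe, satisfies $d\omega = c\,\psip$, $d\psip=0$, $w_3 \equiv 0$.

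The hard part will be the bookkeeping in the elimination step: one must show nonexistence across roughly thirty algebras, and for each the argument is not a single clean obstruction but a case analysis on where $d(\Lambda^2)$ sits relative to the $\SL(3,\C)$-orbit of stable 3-forms, combined with the positivity/signature constraint on the induced metric. Rather than grinding each case, I would organize the elimination by the pair $(\dim[\cg,\cg],\ \text{step})$ and by the dimension of the space of exact 3-forms, using the fact (from \cite{Raf}) that $w_1^-$ is automatically constant in the coupled case to pin down $c$, and using Salamon's tables of which nilpotent algebras even admit half-flat structures to prune the list before the finer analysis. The existence direction, by contrast, is routine once the two surviving coframes are written down, since it reduces to verifying the displayed structure equations of $\mathfrak{I}$ and $\mathfrak{N}$ against the normal forms of $\omega$ and $\psip$.
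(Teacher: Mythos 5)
There is a genuine gap: your proposal is a strategy outline, not a proof, and the step that carries the entire content of the proposition --- the elimination of the other $\sim 30$ nilpotent Lie algebras --- is never executed. Note first that the paper itself does not prove this statement; it cites the authors' earlier work \cite{FR}, so the comparison is really against the method there, which (like your plan) is a case-by-case analysis over the Magnin/Salamon list using concrete necessary conditions. Your two ``decisive reductions'' are too vague to do that work. Reduction (i) (``nondegeneracy of $\omega$ forces the structure constants to be rich enough that $\psip=c^{-1}d\omega$ spans a full stable orbit'') is not a checkable obstruction as stated; the sharp, usable necessary condition is that $\cg$ must admit an \emph{exact} $3$-form $\rho=d\omega$ with $\lambda(\rho)<0$ \emph{and} with $\omega$ of type $(1,1)$ and positive with respect to $J_\rho$, and one must actually test this algebra by algebra (or parametrize a generic $\omega$, impose $\lambda(d\omega)<0$, compatibility, and positivity of $h$, and show the resulting polynomial system is inconsistent). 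Reduction (ii) is also off target: on a non-abelian nilpotent Lie algebra there are no invariant Calabi--Yau structures at all (Milnor's theorem forces negative scalar curvature, as the paper recalls), so ``ruling out the cases where the only half-flat structures are the closed ones'' does not eliminate anything; the condition $c\neq 0$ is automatic once $d\omega\propto\psip$ is required nontrivially, since $d\omega=0$ would make $\omega$ a closed nondegenerate invariant $2$-form with further vanishing torsion. Pruning via Conti's classification of half-flat nilpotent Lie algebras is legitimate and does reduce the list, but still leaves a substantial number of algebras for which no argument is given; writing ``I expect that after imposing these the surviving algebras are exactly $\mathfrak{I}$ and $\mathfrak{N}$'' concedes the point.

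On the positive side, several ingredients you identify are correct and are indeed the right tools: $d(\omega^2)=2c\,\psip\wedge\omega=0$ is automatic from compatibility, so the coupled condition reduces to $d\omega=c\,\psip$ with $c$ a nonzero constant (and $d\psip=0$ then follows by applying $d$); the exactness of $\psip$ together with the rigidity of the ${\rm SL}(3,\C)$-orbit of negative stable $3$-forms is the key algebraic constraint; and the existence direction for $\mathfrak{I}$ and $\mathfrak{N}$ is routine once the displayed coframes are written down (the paper's Examples 4.9 and 4.10 do exactly this). To turn your sketch into a proof you would need to carry out, for each algebra surviving the half-flat pruning other than $\mathfrak{I}$ and $\mathfrak{N}$, the explicit verification that no exact stable $3$-form of negative type admits a compatible taming $2$-form $\omega$ with $d\omega$ proportional to it --- which is precisely the computation delegated to \cite{FR}.
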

\begin{remark}
Recall that the notation $\mathfrak{I} = (0,0,0,0,e^{14}+e^{23},e^{13}-e^{24})$ means that there exists a basis of 1-forms $(e^1,\ldots,e^6)$ for $\mathfrak{I}^*$ such that 
$de^1=0,de^2=0,de^3=0,de^4=0, de^5=e^{14}+e^{23},de^6=e^{13}-e^{24}$, where $d$ is the Chevalley-Eilenberg differential.
\end{remark}

Observe that the two Lie algebras $\mathfrak{I}$ and $\mathfrak{N}$ are isomorphic respectively to the Lie algebras labelled by $\mathfrak{n}_{28}$ and $\mathfrak{n}_{9}$ 
in the work \cite{FR}. Here they are given with different structure equations since in both cases the frame $(e^1,\ldots,e^6)$ is an adapted frame for the coupled 
$\SU(3)$-structure. We emphasize some properties of these coupled structures in the following examples.
\begin{Example}\label{iwacoupled}
$\mathfrak{I}=(0,0,0,0,e^{14}+e^{23},e^{13}-e^{24})$ is (isomorphic to) the well known Iwasawa Lie algebra, which is the Lie algebra of the six-dimensional nilmanifold known 
in literature as {\it Iwasawa manifold} (see for instance \cite{AGS} for the definition).
Since the frame $(e^1,\ldots,e^6)$ is adapted, we have that the pair
$$
\begin{array}{lcl}
\omega &=& e^{12}+e^{34}+e^{56},\\
\psip &=& e^{135}-e^{146}-e^{236}-e^{245},
\end{array}
$$
defines an $\SU(3)$-structure on $\mathfrak{I}$. 
In this case $d\omega=-\psip$ and the non-vanishing intrinsic torsion forms are:
$$\renewcommand\arraystretch{1.4}
\begin{array}{lcl}
w_1^- &=& \frac23,\\
\Wd &=& -\frac43e^{12}-\frac43e^{34}+\frac83e^{56}.
\end{array}
$$
It is easy to check that condition \eqref{dw2proppsip} is satisfied
$$d\Wd = -\frac83\psip$$
and that $-\frac14|\Wd|^2=-\frac83,$ as we expected from Proposition \ref{dw2psip}.  
Finally, the scalar curvature of the metric $h$ induced by the coupled structure is ${\rm Scal}(h)=-2.$
\end{Example}

\begin{Example}
Consider the Lie algebra $\mathfrak{N} = \left(0,0,0,e^{13},e^{14}+e^{23},e^{13}-e^{15}-e^{24}\right)$. Since the frame $(e^1,\ldots,e^6)$ is adapted, we have that the pair
$$
\begin{array}{lcl}
\omega &=& e^{12}+e^{34}+e^{56},\\
\psip &=& e^{135}-e^{146}-e^{236}-e^{245},
\end{array}
$$
defines an $\SU(3)$-structure on $\mathfrak{N}$. Moreover, $d\omega=-\psip$ and the non-vanishing intrinsic torsion forms are:
$$\renewcommand\arraystretch{1.4}
\begin{array}{lcl}
w_1^- &=& \frac23,\\
\Wd &=& -\frac43e^{12}-\frac43e^{34}+e^{36}-e^{45}+\frac83e^{56}.
\end{array}
$$
In this case $d\Wd$ is not proportional to $\psip$ and the scalar curvature of the metric $h$ induced by the coupled structure is ${\rm Scal}(h)=-3.$
\end{Example}

The fact that the Iwasawa manifold admits an invariant coupled structure was also observed in \cite{LT}, 
where the authors wrote it was the unique nilmanifold admitting a coupled structure they knew. Proposition \ref{nilpclass} states that, up to isomorphisms, 
there are only two non-abelian nilpotent Lie algebras admitting a coupled structure, one of which is the Iwasawa. Moreover, as observed in Example \ref{iwacoupled}, 
the coupled structure on the Iwasawa Lie algebra satisfies condition \eqref{dw2proppsip}, 
i.e., $d\Wd$ is proportional to $\psip$. Thus, it is a natural question to ask whether $\mathfrak{N}$ admits a coupled structure satisfying \eqref{dw2proppsip} or not.
In \cite{CKKLTZ}, the authors looked for the possible nilmanifolds admitting an invariant coupled structure satisfying \eqref{dw2proppsip} and concluded that a systematic scan of all 
the possible six-dimensional nilmanifolds yields to two possibilities: the six-torus and the Iwasawa manifold. 
The six-torus has abelian Lie algebra, so it is not considered in Proposition \ref{nilpclass}. Moreover, the intrinsic torsion forms of any invariant $\SU(3)$-structure defined on it are 
all zero. Anyway, this result seems to answer negatively our question and we can prove this is actually what happens.
\begin{Proposition}
There are no coupled $\SU(3)$-structures on $\mathfrak{N}$ for which the exterior derivative of the intrinsic torsion form $\Wd$ is proportional to $\psip$.
\end{Proposition}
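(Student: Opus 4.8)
The plan is to rule out the refined condition by describing \emph{all} coupled $\SU(3)$-structures carried by $\mathfrak{N}$ and testing the constraint on each of them. First I would dispose of the degenerate case. If $\Wd\equiv 0$ the structure is nearly K\"ahler, so by \eqref{scalcoupled} its induced metric has ${\rm Scal}(h)=\frac{15}{2}(w_1^-)^2>0$ (recall $w_1^-\neq 0$ for a genuine coupled structure), contradicting Milnor's theorem \cite{Mil}, which forces invariant metrics on a nilmanifold to have non-positive scalar curvature. Hence $\Wd\neq 0$, and by Proposition \ref{dw2psip} the hypothesis $d\Wd\propto\psip$ is equivalent to the single equation $d\Wd=-\frac{|\Wd|^2}{4}\psip$ with $|\Wd|$ a nonzero constant. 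The task thus reduces to showing that this equation holds for no coupled structure on $\mathfrak{N}$, and the delicate point is that it must be checked over the whole moduli of such structures, not merely on the representative appearing in the example for $\mathfrak{N}$ above.

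Next I would produce the general coupled structure on $\mathfrak{N}$. I would fix an adapted coframe $(e^1,\ldots,e^6)$, so that $\omega=e^{12}+e^{34}+e^{56}$ and $\psip=e^{135}-e^{146}-e^{236}-e^{245}$ are in standard form, and write the Chevalley--Eilenberg differentials $de^i=\sum_{j<k}c^i_{jk}\,e^{jk}$ with undetermined structure constants. Imposing $d^2=0$ (Jacobi), the requirement that the resulting Lie algebra be isomorphic to $\mathfrak{N}$, and the coupled conditions $d\omega=c\,\psip$ and $d\psip=0$, yields a system of linear and quadratic equations in the $c^i_{jk}$. Using the automorphism group of $\mathfrak{N}$ to normalise the coframe and the scaling freedom of the Remark to fix $c$, this system should reduce the admissible coframes to a low-dimensional family, recovering in particular the structure equations of Proposition \ref{nilpclass}; alternatively one may quote the explicit description of the coupled structures on $\mathfrak{n}_9\cong\mathfrak{N}$ obtained in \cite{FR}.

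Finally, for the resulting family I would read off $\Wd$ from the identity $d\psim=-\frac23 c\,\omega^2-\Wd\W\omega$, compute the exact form $d\Wd$, and impose $d\Wd=-\frac{|\Wd|^2}{4}\psip$. Decomposing both sides in the adapted coframe gives a system of polynomial equations in the remaining parameters, which I expect to be inconsistent, the only formal solutions forcing $\Wd=0$ and hence already excluded. The main obstacle is precisely this computation together with the completeness of the parametrisation: one must be certain that no coupled structure has been overlooked, because the analogous computation for the Iwasawa algebra $\mathfrak{I}$ \emph{does} satisfy $d\Wd\propto\psip$ (Example \ref{iwacoupled}). No soft or purely cohomological argument can therefore succeed, and the contradiction has to be extracted from the specific, highly filtered (four-step nilpotent) bracket structure of $\mathfrak{N}$ as opposed to the two-step structure of $\mathfrak{I}$.
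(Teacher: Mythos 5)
Your overall strategy coincides with the paper's: both arguments proceed by exhausting the moduli of invariant coupled structures on $\mathfrak{N}$ and checking that $d\Wd\propto\psip$ fails on every member, and you are right that no soft argument can work, since the Iwasawa algebra does satisfy the condition (Example \ref{iwacoupled}). Your preliminary reductions are correct and are a useful addition: the case $\Wd\equiv 0$ is excluded by \eqref{scalcoupled} together with Milnor's theorem, and Proposition \ref{dw2psip} reduces the hypothesis to the single equation $d\Wd=-\frac{|\Wd|^2}{4}\psip$. Where you diverge is in the parametrization. The paper fixes the structure equations of $\mathfrak{N}$ once and for all, takes a generic $2$-form $\omega=\sum\omega_{ij}e^{ij}$, and \emph{defines} $\psip:=c\,d\omega$; this makes $d\psip=0$ and $d\omega\propto\psip$ automatic, and the remaining open conditions ($\lambda(\psip)<0$, compatibility, positivity of $h$) carve out a semialgebraic set in the $\omega_{ij}$ over which the final polynomial system in the components of $\Wd$ is shown to be solvable only if $c=0$ or $\lambda(\psip)=0$. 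You instead put $(\omega,\psip)$ in standard position and vary the structure constants; that buys you a standard $J$, $h$ and explicit type decompositions, but it transfers all the difficulty into the constraint that the resulting Lie algebra be isomorphic to $\mathfrak{N}$, which your sketch does not explain how to impose. This step is not optional: if you only impose the Jacobi identity and the coupled conditions, your family also contains the coupled structures on $\mathfrak{I}$, for which $d\Wd\propto\psip$ \emph{does} hold, so the polynomial system would have solutions and the argument would collapse. You would need either a complete set of isomorphism invariants separating $\mathfrak{N}$ from the other admissible algebras, or the explicit description of coupled structures on $\mathfrak{n}_9\cong\mathfrak{N}$ from \cite{FR}, as you suggest in passing. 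With that step carried out rigorously your route is viable; the paper's choice of fixing the bracket and varying the forms simply sidesteps the issue, and neither route avoids a substantial symbolic computation at the end.
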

\begin{proof}
The idea is to describe all the possible coupled structures on $\mathfrak{N}$ and see if there exists one of these whose intrinsic torsion form $\Wd$ satisfies the required condition.  
Let us start considering a generic 2-form $\omega$ on $\mathfrak{N}$, we can write it as
$$
\omega = \sum_{1\leq i<j\leq6} \omega_{ij}e^{ij},
$$
where $\omega_{ij}$ are real numbers. We may think the 15-tuple $(\omega_{12},\ldots,\omega_{56})=:(\omega_{ij})$ as a point in the affine space $\mathbb{A}_\R^{15}-\{0\}$.
The homogeneous polynomial $P$ of degree 3 in the unknowns $\omega_{ij}$ appearing as coefficient of $e^{123456}$ 
in the expression of $\omega^3$ has to be non-vanishing, this gives a first constraint for $(\omega_{ij})$. 
Since we want a coupled structure, we consider a 3-form $\psip$ on $\mathfrak{N}$ given by $\psip = cd\omega,$ for some nonzero real number $c$. Assuming
$$
\lambda(\psip) = -4c^4\omega_{56}^2(\omega_{36}\omega_{56}-\omega_{45}\omega_{56}-\omega_{46}^2+\omega_{56}^2)<0,
$$
that is $\omega_{56}\neq0$ and $B:= \omega_{36}\omega_{56}-\omega_{45}\omega_{56}-\omega_{46}^{2}+\omega_{56}^{2}>0$, we can compute the almost complex structure $J$ induced by the stable form $\psip$. Now, we change the basis from $(e_1,\ldots,e_6)$ to a basis 
$(E_1,\ldots,E_6)$ which is adapted for $J$. To do this, it suffices to define $E_i=e_i$ and $E_{i+1}=Je_i$ for $i=1,3,5$. With respect to $(E_1,\ldots,E_6)$, the matrix associated to $J$ 
is skew-symmetric with non-vanishing entries given by ${J^2}_1=1={J^4}_3 = {J^6}_5$.
We can then compute the new structure equations with respect to the dual basis $(E^1,\ldots,E^6)$, obtaining
\begin{eqnarray}
dE^i &=& 0, \quad i=1,2,3\nonumber\\
dE^4 &=&\frac{\omega_{56}} {{\sqrt{B}}}E^{13}, \nonumber\\
dE^5 &=& -\frac{\omega_{46}}{\omega_{56}}E^{13} +\frac{\sqrt{B}}{\omega_{56}} 
\left(E^{14}+E^{23}\right)\nonumber,\\
dE^6 &=&  -\frac{\omega_{26}}{\omega_{56}}E^{12} -\frac{\omega_{46}}{\omega_{56}}E^{14} -\frac{\omega_{36}\omega_{56}+\omega_{45}\omega_{56}-\omega_{46}^{2}-\omega_{56}
^{2}}{\omega_{56}\sqrt {B}}E^{13} -\frac{\omega_{56}} {{\sqrt{B}}}E^{15}  -\frac{\sqrt {B}}{\omega_{56}}E^{24}.\nonumber
\end{eqnarray}
Moreover, we have
\begin{eqnarray}
\psip &=& -c\frac{B}{\omega_{56}}\left(E^{135}-E^{146}-E^{236}-E^{245}\right), \nonumber\\
\psim&=& -c\frac{B}{\omega_{56}}\left(E^{136}+E^{145}+E^{235}-E^{246}\right).\nonumber
\end{eqnarray}
We can write $\omega$ with respect to the new basis and impose it is of type $(1,1)$ with respect to $J$, obtaining 3 equations in the variables $\omega_{ij}$ 
which can be solved under the 
constraint $\lambda(\psip)<0$. We can then consider the symmetric matrix $H$ associated to $h(\cdot,\cdot)=\omega(\cdot,J\cdot)$ with respect to the basis $(E_1,\ldots,E_6)$ and 
denote by $\mathcal{P}\subset \mathbb{A}^{15}_\R$ the set on which it is positive definite. One can check that $P\neq0$ when $(\omega_{ij})\in \mathcal{P}$.
Now, if we let $(\omega_{ij})$ vary in the (non-empty) set $\mathcal{Q}:=\mathcal{P}\cap\{(\omega_{ij}) : \lambda(\psip)<0\}$, we have all the possible non-normalized coupled $
\SU(3)$-structures on 
$\mathfrak{N}$. The intrinsic torsion form $w_1^-$ is always $-\frac{2}{3c}$, while $\Wd$ can be 
computed from its defining properties and the expression of $d\psim$. We are interested in the coupled structures having $\Wd$ such that $d\Wd$ is proportional to $\psip$. 
Thus, we can start with a generic 2-form $w$ of type $(1,1)$ with respect to $J$ and write it as
\begin{eqnarray}
w&=&w_{12}E^{12}+w_{34}E^{34}+w_{56}E^{56}+w_{13}(E^{13}+E^{24}) + w_{14}(E^{14}-E^{23})+w_{15}(E^{15}+E^{26})\nonumber\\
   & & w_{16}(E^{16}-E^{25})+w_{35}(E^{35}+E^{46})+w_{36}(E^{36}-E^{45}),\nonumber
\end{eqnarray}
where $w_{ij}$ are real numbers. Then, we have to impose that $w$ is primitive ($w\W\omega^2=0$) and fulfills
$$d\psim=-\frac{2}{3c}\omega^2-w\W\omega$$
and that $dw$ is proportional to $\psip$. 
The last condition gives rise to a set of polynomial equations in the variables $w_{ij}$ with coefficients depending on $\omega_{ij}$ which can be solved in $\mathcal{Q}$. 
The condition on $d\psim$ gives 13 equations of the same kind as before, we can solve 4 of them, namely those obtained comparing the coefficients of
$E^{3456},E^{2356},E^{1256},E^{2345}$, but then we get that some of the remaining equations can be solved only if $c=0$ or $\lambda(\psip)=0$. The assertion is then proved.  
\end{proof}

The previous results can be summarized as follows
\begin{Proposition}
Let $\mathfrak{g}$ be a six-dimensional, non-abelian, nilpotent Lie algebra endowed with a coupled $\SU(3)$-structure $(\omega,\psip)$ having $d\Wd$ proportional to $\psip$. 
Then $\mathfrak{g}$ is isomorphic to the Iwasawa Lie algebra.
\end{Proposition}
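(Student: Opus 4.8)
The plan is to synthesize the two preceding results, since this statement is essentially a corollary of them. First I would invoke the classification in Proposition \ref{nilpclass}: any six-dimensional non-abelian nilpotent Lie algebra $\mathfrak{g}$ carrying a coupled $\SU(3)$-structure must be isomorphic to either $\mathfrak{I}$ (the Iwasawa Lie algebra) or $\mathfrak{N}$. This immediately reduces the problem to deciding which of these two candidates can support a coupled structure whose torsion form $\Wd$ additionally satisfies $d\Wd \propto \psip$.

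Next I would rule out $\mathfrak{N}$ by appealing to the proposition proved immediately above, which establishes that $\mathfrak{N}$ admits no coupled $\SU(3)$-structure for which $d\Wd$ is proportional to $\psip$. Combining this with the dichotomy from Proposition \ref{nilpclass} forces $\mathfrak{g}$ to be isomorphic to $\mathfrak{I}$, the Iwasawa Lie algebra.

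Finally, to confirm that the statement is not vacuous, I would point to Example \ref{iwacoupled}, where an explicit coupled structure on $\mathfrak{I}$ is exhibited with $d\Wd = -\frac83\psip$, so that the condition $d\Wd \propto \psip$ is indeed realized on the Iwasawa Lie algebra. The genuine computational effort underlying this result, namely the exhaustive analysis of coupled structures on $\mathfrak{N}$, has already been carried out in the previous proposition, so here there is no serious obstacle. The only point requiring care is to ensure that the two cited results are applied under exactly matching hypotheses, that is, a coupled structure with nonzero coupled constant $c$ together with the proportionality $d\Wd \propto \psip$; since both prior statements are phrased precisely in these terms, the implication is immediate.
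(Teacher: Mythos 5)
Your proposal is correct and matches the paper exactly: the paper presents this proposition as a summary of the preceding results, obtained by combining the classification of Proposition \ref{nilpclass} with the exclusion of $\mathfrak{N}$ proved immediately before. Your additional remark that Example \ref{iwacoupled} shows the statement is non-vacuous is a sensible (if implicit in the paper) touch.
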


\subsection{Twistor spaces}
In the work \cite{Tom}, it was observed that on the twistor space $Z$ over a self-dual Einstein 4-manifold $(M^4,g)$ there exists a coupled structure. Moreover, for a suitable value of the 
scalar curvature of $g$, the metric induced by this structure is Einstein. 

Recall that given a four-dimensional, oriented Riemannian manifold $(M^4,g)$, the set of positive, orthogonal almost complex structures on $M^4$ forms a smooth manifold $Z$ 
called the {\bf twistor space} of $M^4$, which can be viewed as the 2-sphere bundle $\pi:Z\rightarrow M^4$ consisting of the unit $-1$ eigenvectors 
of the Hodge operator acting on $\Lambda^2TM^4$ \cite{Mus}.

On $Z$, it is possible to define two almost complex structures (see for example \cite{AGI}), one of which is never integrable as shown in \cite{ES}. 
Let us denote it by $J$.

When the metric $g$ is self-dual and Einstein, Xu showed in \cite{Xu} that on $(Z,J)$ there exists a basis of $(1,0)$-forms $\e^1,\e^2,\e^3$ such that 
the first structure equations are:
\begin{equation}
d \left(\begin{array}{c} \e^1 \\ \e^2 \\ \e^3 \end{array} \right) = -\left(\begin{array}{cc} \alpha & 0 \\ 0 & -{\rm tr}(\alpha) \end{array} \right) \W \left(\begin{array}{c} \e^1 \\ \e^2 \\ \e^3 
\end{array} \right) + \left(\begin{array}{c} \overline{\e^2\W\e^3} \\ \overline{\e^3\W\e^1} \\ \sigma\overline{\e^1\W\e^2} \end{array} \right), 
\end{equation}
where $\alpha$ is a $2\times2$ skew-Hermitian matrix of 1-forms and $\sigma:=\frac{{\rm Scal}(g)}{24}$. 
Using these, it is easy to show that the following pair of forms defines a coupled $\SU(3)$-structure on $Z$ \cite{Tom}
$$\renewcommand\arraystretch{1.4}
\begin{array}{rcl}
\omega &=& \frac{i}{2}\left(\e^1\W\overline{\e^1}+\e^2\W\overline{\e^2}+\e^3\W\overline{\e^3}\right), \\
\Psi &=& i(\e^1\W\e^2\W\e^3).
\end{array}
$$
Observe that $J$ is the almost complex structure induced by $\Re(\Psi)$ and that the metric induced by $\omega$ and $J$ takes the following form
$$h = \e^1\odot\overline{\e^1}+\e^2\odot\overline{\e^2}+\e^3\odot\overline{\e^3}.$$
Moreover, the non-vanishing intrinsic torsion forms have the following expressions
$$\renewcommand\arraystretch{1.4}
\begin{array}{rcl}
w_1^- &=& \frac23(\sigma+2),\\
w_2^- &=& -\frac23i(\sigma-1)\left(\e^1\W\overline{\e^1}+\e^2\W\overline{\e^2}-2\e^3\W\overline{\e^3}\right),
\end{array}
$$
$d\Wd$ is proportional to $\psip$
$$d\Wd = - \frac83(\sigma-1)^2\psip,$$
and $3(w_1^-)^2\geq|\Wd|^2$ if and only if $\frac{10-6\sqrt{2}}{7}\leq\sigma\leq\frac{10+6\sqrt{2}}{7}$.

We can consider a local frame $(e^1,\ldots,e^6)$ for $\Lambda^1(Z)$ such that $\e^1=e^1+ie^2$, $\e^2=e^3+ie^4$, $\e^3=e^5+ie^6$ and compute the Ricci curvature 
of the metric induced by the coupled structure using the results of \cite{BV}. What we get is that the scalar curvature of $h$ is
$$
{\rm Scal}(h) = -2\sigma^2+24\sigma+8
$$
and the traceless part of the Ricci tensor of $h$ with respect to the considered frame has the following form 
$${\rm Ric}^0(h) = -\frac23(\sigma-1)(\sigma-2){\rm diag}(1,1,1,1,-2,-2).$$
Thus, the metric $h$ is Einstein if and only if $\sigma=1$ or $\sigma=2$, that is if and only if the scalar curvature of $g$ is $24$ or $48$ respectively. In the first case the coupled 
structure is actually nearly K\"ahler since $\Wd=0$, while in the second case we get an example of a coupled $\SU(3)$-structure inducing an Einstein metric. More in detail, the latter 
has the following non-vanishing intrinsic torsion forms
$$
\renewcommand\arraystretch{1.2}
\begin{array}{rcl}
w_1^- &=& \frac83,\\
\Wd &=& -\frac43\left(e^{12}+e^{34}-2e^{56}\right).
\end{array}
$$
In particular, the coupled constant is $c=-4$ and the scalar curvature is ${\rm Scal}(h) = 48$. 
Moreover, the characterization given in Proposition \ref{coupledeinsteindw2} is satisfied by this example.

Recall that when ${\rm Scal}(g)>0$, a compact, self-dual, Einstein 4-manifold $(M^4,g)$ is isometric either to $S^4$ or to $\C\mathbb{P}^2$ with their canonical 
metrics \cite[Thm. 13.30]{Bes}, thus $Z$ is either $\C\mathbb{P}^3$ or the flag manifold $\SU(3)/T^2$.

\subsection{$G_2$-structures with special metrics induced by coupled Einstein structures}
We can now use the coupled Einstein structure on $Z$ to construct a $G_2$-structure with full intrinsic torsion inducing an Einstein metric 
and a locally conformal calibrated $G_2$-structure inducing a Ricci-flat metric.

First of all, we rescale the coupled Einstein structure on $Z$ it in the following way
$$
\begin{array}{rcl}
\tilde\omega &=& \frac85\omega,\\
\tilde\psi_+ &=& \left(\frac85\right)^{\frac32}\psip.
\end{array}
$$
Then, $(\tilde\omega,\tilde\psi_+)$  is a coupled structure with coupled constant $c=-\sqrt{10}$ 
and inducing the metric $\tilde{h} = \frac85h$. Moreover, ${\rm Scal}(\tilde{h})=30$ and ${\rm Ric}(\tilde{h}) = 5\tilde{h}$.

As we observed in Section \ref{preliminaries}, starting from the coupled Einstein structure $(\tilde\omega,\tilde\psi_+)$, we can construct a $G_2$-structure $\f$ on the sin-cone $S(Z)$ 
inducing the sin-cone metric 
$g_\f=dt^2+\sin^2(t)\tilde{h}$. By Proposition \ref{einsteinsincone}, we then have that $g_\f$ is Einstein with Einstein constant 6. 
Moreover, it is not difficult to show that the intrinsic torsion forms of the $G_2$-structure induced on the sin-cone by a coupled structure with coupled constant $c$ are
$$\renewcommand\arraystretch{1.4}
\begin{array}{rcl}
\tz &=& \frac{8c+4}{7},\\
\tu &=& \left(1-\frac{c}{3}\right)\cot(t)dt,\\
\td &=& -\frac{\sin(2t)}{2}\Wd,\\
\ttr &=&\frac{c-3}{7}\left(\sin^4(t)\psim-\sin^3(t)\cos(t)\psip+\frac43\sin^2(t)dt\W\omega\right)-\sin^2(t)dt\W\Wd.
\end{array}
$$ 
Thus, the coupled Einstein structure $(\tilde\omega,\tilde\psi_+)$ induces a $G_2$-structure with full intrinsic torsion and Einstein metric on the sin-cone $S(Z)$.

If we consider the $G_2$-structure $\f$ induced on the cone $C(Z)$ by $(\tilde\omega,\tilde\psi_+)$, then the metric $g_\f = dt^2+t^2\tilde{h}$ is Ricci-flat by Proposition 
\ref{einsteincone}. Moreover, the non-vanishing intrinsic torsion forms of the $G_2$-structure constructed on the cone from a coupled structure with coupled constant $c$ are 
$$
\begin{array}{rcl}
\tu &=& \left(\frac1t-\frac{1}{3t}c\right)dt,\\
\td &=& -t\Wd.
\end{array}
$$
Therefore, the coupled Einstein structure $(\tilde\omega,\tilde\psi_+)$ induces a locally conformal calibrated $G_2$-structure on the cone $C(Z)$ whose associated metric is Ricci-flat. 

\begin{remark}
It is worth observing here that calibrated $G_2$-structures inducing a Ricci-flat metric are actually parallel \cite{Br}. 
The previous example shows that a result of this kind is not true anymore for locally conformal calibrated $G_2$-structures.
\end{remark}

\section{Flows}\label{flows}
In this section, we study the behaviour of coupled structures with respect to known evolution equations (flows) of $\SU(3)$-structures. 

The {\it Hitchin flow}, introduced in \cite{Hit} as the Hamiltonian flow of a certain functional, allows to construct (non-complete) metrics with holonomy in $G_2$ starting from a suitable 
$\SU(3)$-structure. The idea is to consider a 6-manifold $N$ endowed with an $\SU(3)$-structure $(\omega,\psip)$ and define a $G_2$-structure on $M:=I\times N$ 
for some interval $I\subseteq\R$ by
$$
\f = dt\W\omega+\psip,
$$
where $\omega$ and $\psip$ depend on the coordinate $t$ on $I$. If we require the $G_2$-structure to be parallel, we get that for each $t$ fixed the $\SU(3)$-structure has to 
be half-flat and that, when $t$ is not fixed, the following evolution equations have to hold 
\begin{equation}\label{hitchinflow}
\begin{cases}
\ddt\psip = d\omega\\
\ddt\omega\W\omega = -d\psim
\end{cases}.
\end{equation}
These equations are the so called Hitchin flow equations. A solution of them with initial condition a given $\SU(3)$-structure $(\omega(0), \psip(0))$ exists when the latter is 
half-flat and analytic, but may not exist when the analytic hypothesis is dropped \cite{Br2}. Moreover, it is easy to show that an $\SU(3)$-structure $(\omega(t), \psip(t))$ which is half-flat 
for $t=0$ and evolves as prescribed in \eqref{hitchinflow} stays half-flat as long as it exists. 

In the work \cite{DLS}, a generalization of the Hitchin flow was used to study the moduli space of $\SU(3)$-structure manifolds. 
The starting point to define this flow is to consider the embedding of an $\SU(3)$-structure into a non-compact manifold endowed with an integrable $G_2$-structure.
This is motivated by the subject the authors are interested in, namely four-dimensional domain wall solutions of heterotic string theory that preserve $\mathcal{N} = \frac12$ 
supersymmetry (see also \cite{GLL}). In this case, the internal six-dimensional manifold is endowed with an $\SU(3)$-structure and one can combine it with the direction perpendicular to 
the domain wall in the four-dimensional non-compact space time to get a seven-dimensional non-compact manifold endowed with a $G_2$-structure. 
The physical setting provides further constraints on the intrinsic torsion forms of the $G_2$-structure, which we will recall later. 
One can then study under which conditions a certain class of $\SU(3)$-structures is preserved by this generalized flow.

It is then a natural question to ask whether the coupled condition is preserved by the Hitchin flow and, more in general, which constraints arise requiring that a solution to the 
generalized Hitchin flow is coupled as long as it exists. We begin giving the following definition.
\begin{Definition}
Let $(\omega(t),\psip(t))$ be a solution for the Hitchin flow defined on an interval $I\subseteq\R$ containing $0$ and starting from a coupled structure at $t=0$.  
If $(\omega(t),\psip(t))$ is a coupled structure for each $t\in I$, that is $d\omega(t) = c(t)\psip(t)$ for some smooth function $c:I\rightarrow\R$, 
we call it a {\it coupled solution} for the Hitchin flow.
\end{Definition}

Coupled solutions for the Hitchin flow can be easily characterized and induce an almost complex structure not depending on $t$.
\begin{Proposition}\label{shapecpdsol}
Let $N$ be a six-dimensional manifold and suppose there exists on it a solution $(\omega(t),\psip(t))$ for the Hitchin flow starting from a coupled structure  
$(\omega(0),\psip(0))$ and defined on some interval $I\subseteq\R$ containing 0. 
If $(\omega(t),\psip(t))$ is a coupled solution, then there exists a smooth function $f:I\rightarrow\R$ such that
$$\psip(t) = f(t)\psip(0).$$
Conversely, if the pair $(\omega(t),\psip(t))$ is a solution for the Hitchin flow with $\psip(t) = f(t)\psip(0)$, then it is a coupled solution.
\end{Proposition}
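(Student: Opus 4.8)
The plan is to prove the two implications separately, using throughout only the first Hitchin flow equation $\ddt\psip = d\omega$ together with the coupled relation $d\omega(t)=c(t)\psip(t)$; the second flow equation is not needed, since the coupled condition constrains only $\psip$ and $d\omega$.

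For the forward implication, I would combine the coupled hypothesis with the first flow equation to obtain
$$\ddt\psip(t) = d\omega(t) = c(t)\,\psip(t),$$
where $c$ is a function of $t$ alone, the coupled constant being constant on $N$ (recall $c=-\tfrac32 w_1^-$ with $w_1^-$ constant). Fixing a point $p\in N$, the curve $t\mapsto\psip(t)_p$ lives in the finite-dimensional space $\Lambda^3(T_p^*N)$ and solves the scalar-coefficient linear ODE $\ddt\psip(t)_p = c(t)\,\psip(t)_p$, whose unique solution is $\psip(t)_p = \exp\!\left(\int_0^t c(s)\,ds\right)\psip(0)_p$. Setting $f(t):=\exp\!\left(\int_0^t c(s)\,ds\right)$, which is smooth since $c$ is, gives $\psip(t)=f(t)\psip(0)$ globally on $N$.

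For the converse, I would start from $\psip(t)=f(t)\psip(0)$. As $\psip(t)$ has nonzero constant length for every $t$, the function $f$ never vanishes and $f(0)=1$. Differentiating and invoking the first flow equation yields
$$d\omega(t) = \ddt\psip(t) = f'(t)\,\psip(0) = \frac{f'(t)}{f(t)}\,\psip(t),$$
so $d\omega(t)$ is proportional to $\psip(t)$ with smooth factor $c(t):=f'(t)/f(t)$. The remaining half-flat conditions come for free: $d\psip(t)=f(t)\,d\psip(0)=0$, while $d(\omega(t)^2)=2\,d\omega(t)\W\omega(t)=2c(t)\,\psip(t)\W\omega(t)=0$ by compatibility. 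Hence each $(\omega(t),\psip(t))$ is coupled and the solution is a coupled solution.

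I expect no deep obstacle here; the only point requiring care is the observation that $c$ depends on $t$ alone, which is precisely what reduces the evolution of $\psip$ to a pointwise scalar ODE and makes the exponential formula for $f$ legitimate. Finally, to justify the remark preceding the statement, I would note that the scaling $\psip(t)=f(t)\psip(0)$ leaves the induced almost complex structure unchanged: since $K_\rho$ is homogeneous of degree two and $\lambda$ of degree four in $\rho$, one has $J_{f\psip}=J_{\psip}$, so $J$ does not depend on $t$.
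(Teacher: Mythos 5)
Your proposal is correct and follows essentially the same route as the paper: both directions hinge on combining the first Hitchin flow equation $\ddt\psip=d\omega$ with the coupled relation to get the scalar ODE $\ddt\psip=c(t)\psip$, solved by $f(t)=e^{\int_0^t c(s)\,ds}$, and the converse by differentiating $f(t)\psip(0)$. Your version is marginally more careful than the paper's (you record the proportionality factor as $f'(t)/f(t)$ relative to $\psip(t)$ rather than $f'(t)$ relative to $\psip(0)$, and you spell out why $c$ depends on $t$ alone), but there is no substantive difference.
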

\begin{proof}
If $(\omega(t),\psip(t))$ is a solution for the Hitchin flow with $\psip(t) = f(t)\psip(0)$ then from the flow equation $\ddt\psip(t) = d\omega(t)$ we obtain
$$d\omega(t) = \ddt\left( f(t)\psip(0)\right) = \left(\dt f(t)\right)\psip(0).$$
Thus the solution is a coupled structure with $c(t) = \dt f(t)$.
Suppose now that the solution is coupled,  $d\omega(t) = c(t)\psip(t)$. Then from the flow equation we obtain
$$\ddt\psip(t) = c(t)\psip(t).$$
Working in coordinates on $N$, it is easy to show that
$$\psip(t) = f(t)\psip(0),$$
where 
$$f(t) = e^{\int_0^tc(s)ds}.$$
\end{proof}

\begin{Corollary}
Let $(\omega(t),\psip(t))$ be a coupled solution for the Hitchin flow on a six-dimensional manifold $N$. 
Then the associated almost complex structure is $J(t)=J(0)$, that is, it does not depend on $t$.
\end{Corollary}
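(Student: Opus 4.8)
The plan is to combine the explicit shape of coupled solutions established in Proposition \ref{shapecpdsol} with the scaling behaviour of the map $\rho\mapsto J_\rho$ recalled in Section \ref{preliminaries}. By Proposition \ref{shapecpdsol}, a coupled solution satisfies $\psip(t)=f(t)\psip(0)$ with $f(t)=e^{\int_0^t c(s)\,ds}$, which is strictly positive for every $t\in I$. Since the almost complex structure associated to the $\SU(3)$-structure at time $t$ is $J(t)=J_{\psip(t)}$, the assertion will follow once I show that $J_\rho$ is left unchanged when $\rho$ is multiplied by a positive scalar.

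First I would record how the building blocks $K_\rho$ and $\lambda(\rho)$ rescale. From $K_\rho(v)=A((i_v\rho)\wedge\rho)$ and the linearity of $A$, replacing $\rho$ by $s\rho$ with $s>0$ multiplies both factors $i_v\rho$ and $\rho$ by $s$, so that $K_{s\rho}=s^2K_\rho$ pointwise. Consequently $\lambda(s\rho)=\frac16\,{\rm tr}\,K_{s\rho}^2=s^4\lambda(\rho)$; in particular the sign of $\lambda$ is preserved, so that $J_{s\rho}$ is well defined exactly when $J_\rho$ is (both require $\lambda<0$).

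Then I would substitute these into the defining formula $J_\rho=-\frac{1}{\sqrt{-\lambda(\rho)}}K_\rho$. The factor $s^2$ carried by $K_{s\rho}$ cancels precisely against the factor $s^2=\sqrt{s^4}$ appearing in $\sqrt{-\lambda(s\rho)}=s^2\sqrt{-\lambda(\rho)}$, yielding $J_{s\rho}=J_\rho$. Applying this with $s=f(t)>0$ and $\rho=\psip(0)$ gives $J(t)=J_{\psip(t)}=J_{f(t)\psip(0)}=J_{\psip(0)}=J(0)$, which is the claim.

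The only step requiring attention is the scaling computation itself, namely keeping track of the powers of $s$ entering $K_\rho$ and $\lambda(\rho)$ so that the two cancellations match; this is a short pointwise verification at each $p\in N$ and poses no real obstacle. Everything else is an immediate consequence of Proposition \ref{shapecpdsol} together with the positivity of $f$.
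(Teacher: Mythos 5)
Your proposal is correct and follows exactly the paper's argument: invoke Proposition \ref{shapecpdsol} to get $\psip(t)=f(t)\psip(0)$ and then use the invariance of $J_\rho$ under rescaling of $\rho$. The only difference is that you spell out the scaling computation $K_{s\rho}=s^2K_\rho$, $\lambda(s\rho)=s^4\lambda(\rho)$ that the paper merely asserts (and which in fact holds for any nonzero real $s$, not just positive ones).
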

\begin{proof}
We know that $\psip(t) = f(t)\psip(0)$, therefore
$$J(t) = J_{\psip(t)} = J_{f(t)\psip(0)} = J_{\psip(0)}=J(0),$$
since the almost complex structure induced by $\psip$ does not change if we rescale $\psip$ by a real constant. 
\end{proof}

\subsection{Coupled solutions on six-dimensional nilpotent Lie algebras}
Working on six-dimensional nilpotent Lie algebras, it is possible to show that a coupled solution for the Hitchin flow may not exist. 
As we recalled in Proposition \ref{nilpclass}, the only six-dimensional nilpotent Lie algebras admitting a coupled structure are, up to isomorphisms, $\mathfrak{I}$ and $\mathfrak{N}$. 
In each case, with respect to the frame we considered, the pair $(\omega,\psip)$, where 
\begin{equation}\label{startfoa}
\begin{array}{rcl}
\omega & = & e^{12} + e^{34} + e^{56},\\
\psip     & = & e^{135}-e^{146}-e^{236}-e^{245},
\end{array}
\end{equation}
is a coupled structure with $d\omega = -\psip.$
For completeness, we observe also that  
$$\psim = J\psip = e^{136}+e^{145}+e^{235}-e^{246}.$$

The following result shows our claim.
\begin{Proposition}\label{HFnosol}
Consider the Hitchin flow on the six-dimensional nilpotent Lie algebras $\mathfrak{N}$ and $\mathfrak{I}$. Then on $\mathfrak{I}$ there exists a coupled solution starting from 
\eqref{startfoa} at $t=0$, while on $\mathfrak{N}$ there are no coupled solutions for the Hitchin flow starting from \eqref{startfoa}. 
\end{Proposition}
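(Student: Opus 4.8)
The plan is to use Proposition~\ref{shapecpdsol} and its Corollary to reduce both assertions to an algebraic/ODE problem for the coefficients of $\omega(t)$ in the fixed coframe. For a coupled solution we know $\psip(t)=f(t)\psip(0)$ with $J(t)=J(0)$ constant, hence also $\psim(t)=f(t)\psim(0)$, and $\omega(t)$ is of type $(1,1)$ with respect to the fixed $J(0)$ for every $t$. Substituting into \eqref{hitchinflow}, the first equation becomes $d\omega(t)=\big(\dt f\big)\psip(0)$, so $d\omega(t)$ must be proportional to $\psip(0)$ at all times, while the second becomes $\ddt\omega\W\omega=-f(t)\,d\psim(0)$. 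Since every invariant form has constant coefficients in the fixed coframe, I would expand $\omega(t)$ in the nine-dimensional space of $(1,1)$-forms and match coefficients.

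For $\mathfrak{I}$ I would exhibit an explicit coupled solution. Here one computes $d\psim(0)=4\,e^{1234}$, while $d(e^{56})=-\psip(0)$ and $e^{12},e^{34}$ are closed; this suggests the diagonal ansatz $\omega(t)=f(t)^2(e^{12}+e^{34})+f(t)^{-2}e^{56}$ together with $\psip(t)=f(t)\psip(0)$. A direct check shows the first flow equation reduces to $\dt f=-f^{-2}$, solved by $f(t)=(1-3t)^{1/3}$, and that then $\ddt\omega\W\omega=-4f\,e^{1234}=-f\,d\psim(0)$, so the second equation holds as well; compatibility, normalization and positivity hold because the induced metric is the positive diagonal one and $\omega(t)^3=f^2\,\omega(0)^3$. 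By the converse part of Proposition~\ref{shapecpdsol} this is a coupled solution, proving existence on $\mathfrak{I}$.

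For $\mathfrak{N}$ I would argue by contradiction, the key new feature being that $d\psim(0)=4\,e^{1234}-e^{1236}+e^{1245}$ carries the extra term $e^{1245}-e^{1236}$. First I would compute the exterior derivative of each of the nine $(1,1)$-forms and observe that imposing $d\omega(t)\propto\psip(0)$ kills all but the five forms $e^{12},\,e^{34},\,e^{56},\,e^{14}-e^{23},\,e^{16}-e^{25}$ (the other four produce the independent terms $e^{123}$, $e^{125}$, $e^{136}-e^{145}$ and $e^{135}$). Writing $\omega(t)=a_1e^{12}+a_2e^{34}+a_3e^{56}+b_2(e^{14}-e^{23})+b_4(e^{16}-e^{25})$ and inserting this into $\ddt\omega\W\omega=-f\,d\psim(0)$, matching coefficients yields $\dt(a_1a_2)=-4f$, the constancy of $a_1a_3$, $a_2a_3$, $a_2b_4$, $a_3b_2$, and, crucially from the $e^{1245}-e^{1236}$ part, $\dt(b_2b_4)=-f$.

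The contradiction then comes out cleanly: the initial condition \eqref{startfoa} gives $a_i(0)=1$, $b_j(0)=0$, $f(0)=1$, so $a_2b_4\equiv0$ and $a_3b_2\equiv0$; since positivity of the induced metric forces $a_3=\omega(t)(e_5,e_6)>0$ and $a_2a_3\equiv a_2(0)a_3(0)=1$ gives $a_2\neq0$, we conclude $b_2\equiv b_4\equiv0$, whence $\dt(b_2b_4)\equiv0$, contradicting $\dt(b_2b_4)=-f$ with $f(0)=1$. I expect the main obstacle to be the bookkeeping in the two differential computations — obtaining $d\psim(0)$ and the nine differentials of the $(1,1)$-forms with all signs correct — together with recognizing the structural point that on $\mathfrak{N}$ the term $e^{1245}-e^{1236}$ of $d\psim(0)$ can only be fed by the off-diagonal product $b_2b_4$, while the vanishing cross-terms simultaneously force these same off-diagonal coefficients to vanish.
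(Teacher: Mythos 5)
Your proposal is correct and follows the same overall strategy as the paper's proof: use Proposition~\ref{shapecpdsol} to fix $\psip(t)=f(t)\psip(0)$ and reduce both flow equations to ODEs for the coefficients of $\omega(t)$ in the fixed coframe. For $\mathfrak{I}$ your ansatz $\omega(t)=f^2(e^{12}+e^{34})+f^{-2}e^{56}$ with $f(t)=(1-3t)^{1/3}$ is exactly the paper's solution ($a_1=a_2=f^2$, $a_3=f^{-2}$), just parametrized by $f$ from the outset. For $\mathfrak{N}$ the paper starts from all $15$ coefficients $b_{ij}(t)$, imposes compatibility and the two flow equations, and reports only that ``solving some of these differential equations'' yields $f\equiv0$; you instead use the constancy of $J$ to restrict to the nine $(1,1)$-forms, use $d\omega(t)\propto\psip(0)$ to cut down to five, and locate the obstruction precisely: $\dt(a_2b_4)=\dt(a_3b_2)=0$ with zero initial data force $b_2\equiv b_4\equiv0$ (since $a_2,a_3$ never vanish, by positivity of $h(t)$ and $\dt(a_2a_3)=0$), while the $e^{1245}-e^{1236}$ component of $-f\,d\psim(0)$ demands $\dt(b_2b_4)=-f$, which fails at $t=0$. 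Your computations of $d\psim(0)=4e^{1234}-e^{1236}+e^{1245}$ and of the surviving $(1,1)$-forms check out, so the contradiction is sound; this is a tighter and more transparent version of the paper's computation, since it identifies exactly which cross-term of $\omega^2$ is starved by the initial data. Two harmless slips: the $e^{1234}$ and $e^{1256}$ coefficients of $\ddt\omega\W\omega$ are $\dt(a_1a_2-b_2^2)$ and $\dt(a_1a_3-b_4^2)$ rather than $\dt(a_1a_2)$ and $\dt(a_1a_3)$, because $(e^{14}-e^{23})^2=-2e^{1234}$ and $(e^{16}-e^{25})^2=-2e^{1256}$ contribute; and $e^{135}$ is a component of $\psip(0)$ rather than independent of it, although proportionality still forces the coefficient of $e^{36}-e^{45}$ to vanish. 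Neither affects your argument, which only uses the $a_2a_3$, $a_2b_4$, $a_3b_2$ and $b_2b_4$ equations.
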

\begin{proof}
Let us start with $\mathfrak{I}$, it admits a coupled solution for the Hitchin flow already described in \cite{CF}. We recover it here starting from a suitable pair $(\omega(t),\psip(t))$ 
and requiring it satisfies the Hitchin flow equations.
From Proposition \ref{shapecpdsol}, we know that $(\omega(t),\psip(t))$ is a coupled solution if and only if
$$\psip(t) = f(t)\psip(0) = f(t)(e^{135}-e^{146}-e^{236}-e^{245}),$$
with $f(0) = 1$. It is also clear that $\psim(t) = f(t)\left(e^{136}+e^{145}+e^{235}-e^{246}\right)$. 
Moreover, since we already know the form of the solution, we consider three smooth functions $a_1(t), a_2(t), a_3(t)$ with $a_i(0)=1$ and such that
$$\omega(t) = a_1(t)e^{12} + a_2(t)e^{34} + a_3(t)e^{56}.$$
From now on we omit the  $t$-dependence of the considered functions. The forms $\omega(t)$ and $\psi_\pm(t)$ are compatible for each $t$ and from the normalization condition we 
get
\begin{equation}
f^2 = a_1a_2a_3. \label{IWHFN}
\end{equation}
From the first Hitchin flow equation in \eqref{hitchinflow} we obtain
\begin{equation}
\dt{f} = -a_3, \label{IWHF11}
\end{equation}
while from the second one we have
\begin{eqnarray}
\dt(a_1a_3) & = & 0, \label{IWHF21}\\
\dt(a_2a_3) & = & 0, \label{IWHF22}\\
\dt(a_1a_2) & = & -4f. \label{IWHF23}
\end{eqnarray}
From \eqref{IWHF21}, \eqref{IWHF22} and the starting conditions at $t=0$ we deduce that 
$$a_1 = a_2 = \frac{1}{a_3}.$$
Using this result and \eqref{IWHFN}, it holds necessarily
$$f = \frac{1}{\sqrt{a_3}}.$$
Thus the ODE \eqref{IWHF11} becomes
$$\dt a_3 = 2a_3^2\sqrt{a_3}$$
and solving this we get
$$a_3 = (1-3t)^{-\frac23}.$$
It is then easy to check that also \eqref{IWHF23} is satisfied.
Then the pair 
\begin{eqnarray}
\omega(t) & = & (1-3t)^{\frac23}e^{12} + (1-3t)^{\frac23}e^{34} + (1-3t)^{-\frac23}e^{56},\nonumber \\
\psip(t) & = & (1-3t)^{\frac13}(e^{135}-e^{146}-e^{236}-e^{245})\nonumber
\end{eqnarray}
is a coupled solution for the Hitchin flow.

We consider now $\mathfrak{N}$, we will show that there are no coupled solutions starting from \eqref{startfoa}. Also in this case we need 
$$\psip(t) = f(t)\psip(0) = f(t)(e^{135}-e^{146}-e^{236}-e^{245}),$$
with $f(0)=1$, while we consider 15 smooth real valued functions $b_{ij}(t)$, $1\leq i < j \leq 6$, such that 
$$\omega(t) = \sum_{1\leq i < j \leq 6}b_{ij}(t)e^{ij},$$
$b_{12}(0)=b_{34}(0)=b_{56}(0)=1$ and $b_{ij}(0)=0$ for the remaining functions. 
First of all, we impose that the equations resulting from the compatibility condition $\omega(t)\W\psip(t)=0$ are satisfied, then we consider the first and the second Hitchin flow equations 
and we compute the ODEs deriving from them. What we obtain after solving some of these dif{}ferential equations is that $f\equiv0$, which can not be possible.
\end{proof}

\subsection{Generalized Hitchin flow}
Since coupled solutions for the Hitchin flow may not exist in general as Proposition \ref{HFnosol} states, we can consider the generalized Hitchin flow and investigate which properties 
the intrinsic torsion forms have to satisfy in order to preserve the coupled condition. 

In this case, we start with an $\SU(3)$-structure $(\omega,\psip)$ depending on a parameter $t\in I\subseteq\R$ and we construct a $G_2$-structure on $M := I\times N$ by
$$
\f = \nu_t dt\W\omega+\Re(F\Psi),
$$
where $\nu_t\in C^\infty(M)$ and $F$ is a complex valued smooth function defined on $M$ and having constant module 1. 
Observe that the Riemannian metric defined by $\f$ is
$$
g_\f = \nu_t^2dt^2+h.
$$

As we already recalled, in the case of $\mathcal{N}=\frac12$ domain wall solutions the non-vanishing intrinsic torsion forms of the $G_2$-structure are $\tz,\tu,\ttr$. 
On $M=I\times N$, $\tu$ and $\ttr$ can be decomposed as
$$
\begin{array}{rcl}
\tu &=& \tut dt+\tu^N,\\
\ttr &=& dt\W \ttrt+\ttr^N,
\end{array}
$$
where $\tut$ is a smooth function on $M$, $\tu^N$ is a 1-form on $N$, $\ttrt$ is a 2-form on $N$ depending on $t$ and $\ttr^N$ is a 3-form on $N$.
Moreover, the following constraints hold
\begin{eqnarray}
\tut &=&\frac12\ddt\phi,\nonumber\\
\tu^N&=&\frac12d\phi,\nonumber\\
d_7\tz&=&0,\nonumber
\end{eqnarray}
where $\phi$ is the ten-dimensional dilaton, $d_7$ denotes the exterior derivative on $M$ and $d$ denotes it on $N$.

A general argument allows to write down the equations of the $\SU(3)$-structure flow associated to the embedding and some relations between the torsion forms of the 
$\SU(3)$-structure and the $G_2$-structure. In particular
$$
w_4 = 2\tu^N,
$$
therefore, if we have an $\SU(3)$-structure with vanishing $w_4$, we get $d\phi =2\tu^N=0.$

Following \cite{DLS}, we work in the gauge $F=1$, in this case
$$\f = \nu_t dt\W\omega+\psip.$$
If we suppose that the structure is coupled for each $t$, i.e., 
\begin{equation}
\begin{array}{ccl}\label{hf}
d\omega(t) &=& c(t)\psip(t),\\
d\psip(t) &=&0,\\
d\psim(t) &=& -\frac23c(t)(\omega(t))^2 - \Wd(t)\W\omega(t),
\end{array}
\end{equation}
where $c:I\rightarrow\R$ is a smooth function such that $w_1^-(t)=-\frac23c(t)$, then the 2-form $\omega(t)$ evolves as
\begin{equation}\label{flowomega}
\ddt\omega(t) = \lt\omega(t)+h_t,
\end{equation}
where 
\begin{eqnarray}
\lt&=&2\tut-\nu_t w_1^-(t),\label{lambdat}\\
h_t&=& \nu_t\Wd(t)-*(d\nu_t\W*\psip(t)).\label{ht}
\end{eqnarray}
Moreover, it follows from a general argument involving the flow equations that 
$$
d\lt=0
$$ 
and using one of the constraints recalled earlier we get
$$
d\tut=\frac12d\left(\ddt\phi\right)=\frac12 \ddt(d\phi)=0.
$$ 
Taking the exterior derivative of both sides of \eqref{lambdat} we then have
$$d\nu_t=0,$$
thus, $\nu_t$ is actually a function of $t$ and \eqref{ht} becomes $h_t= \nu_t\Wd.$

\begin{remark}
With our convention, $\Wd$ here is $-\Wd$ in the work \cite{DLS}.
\end{remark}

The flow equations for $\psip(t)$ and $\psim(t)$ determined in \cite{DLS} reduce to the following in the coupled case:
\begin{eqnarray}
\ddt\psip(t) &=& \frac32\lt\psip(t)-\frac74\tz \nu_t\psim(t)-\nu_t\gamma,\label{flowpsip}\\
\ddt\psim(t)&=& \frac74\tz \nu_t\psip(t)+\frac32\lt\psim(t)+\nu_t J\gamma,\label{flowpsim}
\end{eqnarray}
where $\gamma$ is a primitive 3-form of type $(2,1)+(1,2)$ appearing in the expression of the Hodge dual of $\ttr^N$ on $N$.

We derive now all the conditions that arise requiring these flow equations preserve the coupled condition. We may sometimes omit the $t$-dependence of the forms for brevity.

First of all, suppose that for each $t$ the coupled condition $d\omega(t) = c(t)\psip(t)$ holds. Differentiating both sides with respect to $t$, we have
$$
d\left(\ddt\omega\right) = \dot{c}\psip+c\left( \frac32\lt\psip-\frac74\tz \nu_t\psim-\nu_t\gamma\right).
$$
Moreover, taking the exterior derivative of both sides of \eqref{flowomega}, using $d\nu_t=0$ and the hypothesis on the coupled condition,  we obtain
$$
d\left(\ddt\omega\right) =\lt c\psip+\nu_t d\Wd.
$$
Comparing the two equations it follows
$$
\nu_t d\Wd = \dot{c}\psip+\frac12c\lt\psip-\frac74c\tz  \nu_t\psim- c \nu_t \gamma.
$$
Wedging both sides by $\psim$ and using the fact that $\gamma\W\psim=0$ since $\gamma\in \Lambda^{2,1} \oplus \Lambda^{1,2}$, we get
\begin{equation}\label{prescoupled}
\nu_t d\Wd\W\psim = \frac23\dot{c}~\omega^3+\frac13c\lt\omega^3.
\end{equation}
Since for each $t$ it holds $d\Wd\W\psim = -|\Wd|^2\frac{\omega^3}{6},$ where the norm is induced by $h(t)$, equation \eqref{prescoupled} becomes
$$
-\nu_t |\Wd|^2\frac{\omega^3}{6} =\frac23\dot{c}~\omega^3+\frac13c\lt\omega^3
$$
and the following result is proved.
\begin{Proposition}
Suppose that the generalized Hitchin flow preserves the coupled condition $d\omega(t) = c(t)\psip(t)$, then the function $c(t)$ must evolve in the following way
\begin{equation*}
\ddt c(t) = -\frac12c(t)\lt-\frac14\nu_t |\Wd(t)|_{h(t)}^2.
\end{equation*}
Moreover, for each $t$ it must hold
\begin{equation*}
d\Wd = -\frac14|\Wd|^2\psip-\frac74c\tz\psim- c\gamma.
\end{equation*}
\end{Proposition}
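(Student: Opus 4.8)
The plan is to compute the exterior derivative of $\ddt\omega$ in two different ways and to compare the outcomes. First I would differentiate the coupled condition $d\omega=c\psip$ with respect to $t$; since $d$ and $\ddt$ commute, substituting the flow equation \eqref{flowpsip} for $\ddt\psip$ yields one expression for $d\!\left(\ddt\omega\right)$ in terms of $\psip$, $\psim$ and $\gamma$. Independently, I would apply $d$ to the evolution equation \eqref{flowomega} for $\omega$, using the previously established facts $d\nu_t=0$ and $h_t=\nu_t\Wd$ together with $d\psip=0$ and the coupled condition; this gives a second expression for $d\!\left(\ddt\omega\right)$ involving $\nu_t\,d\Wd$. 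Equating the two produces the single comparison identity
$$\nu_t\,d\Wd=\dot c\,\psip+\tfrac12 c\lt\psip-\tfrac74 c\tz\nu_t\psim-c\nu_t\gamma,$$
which is the pivot of the whole argument; both assertions then follow from it by algebra.

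To obtain the evolution law for $c$, I would wedge the comparison identity with $\psim$. The term $\gamma\W\psim$ vanishes because $\gamma$ is of type $(2,1)+(1,2)$, while $\psip\W\psim=\tfrac23\omega^3$ by the normalization condition; thus the right-hand side collapses to $\tfrac23\dot c\,\omega^3+\tfrac13 c\lt\omega^3$, which is exactly \eqref{prescoupled}. For the left-hand side I need the pointwise identity $d\Wd\W\psim=-|\Wd|^2\tfrac{\omega^3}{6}$, which I would establish precisely as in the proof of Proposition \ref{dw2psip}: differentiate $\Wd\W\psim=0$ and use $\Wd\W\omega^2=0$, $\Wd\W\omega=-*\Wd$ and $\omega^3=6\,dV_h$. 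Comparing the coefficients of the top form $\omega^3$ and solving for $\dot c$ then gives $\ddt c=-\tfrac12 c\lt-\tfrac14\nu_t|\Wd|^2$.

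For the constraint on $d\Wd$, I would substitute this expression for $\dot c$ back into the comparison identity. The two $\psip$-terms combine as $\dot c\,\psip+\tfrac12 c\lt\psip=\bigl(\dot c+\tfrac12 c\lt\bigr)\psip=-\tfrac14\nu_t|\Wd|^2\psip$, leaving
$$\nu_t\,d\Wd=-\tfrac14\nu_t|\Wd|^2\psip-\tfrac74 c\tz\nu_t\psim-c\nu_t\gamma.$$
Dividing through by $\nu_t$, which is legitimate because $\nu_t>0$ (it is the coefficient of $dt^2$ in the metric $g_\f=\nu_t^2dt^2+h$), yields the stated formula for $d\Wd$.

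There is no deep obstruction in this argument: at its core it is just a comparison of two computations of $d\!\left(\ddt\omega\right)$, followed by linear bookkeeping that relies on the already-derived simplifications $d\nu_t=0$ and $h_t=\nu_t\Wd$. The one place where the fine structure of a coupled $\SU(3)$-structure genuinely enters, and hence the step I would treat most carefully, is the pointwise identity $d\Wd\W\psim=-|\Wd|^2\omega^3/6$; verifying it correctly (with the right sign coming from $\Wd\W\omega=-*\Wd$ and $\Wd\W*\Wd=|\Wd|^2\,dV_h$) is what makes the coefficient $-\tfrac14$ in both conclusions come out as claimed.
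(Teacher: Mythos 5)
Your proposal is correct and follows exactly the paper's own argument: compute $d\left(\ddt\omega\right)$ once by differentiating the coupled condition in $t$ and once by applying $d$ to the flow equation \eqref{flowomega}, compare to get $\nu_t\,d\Wd=\dot c\,\psip+\tfrac12 c\lt\psip-\tfrac74 c\tz\nu_t\psim-c\nu_t\gamma$, wedge with $\psim$ using $\gamma\W\psim=0$ and $d\Wd\W\psim=-|\Wd|^2\tfrac{\omega^3}{6}$ to extract the ODE for $c$, then substitute back for the $d\Wd$ constraint. The only difference is that you spell out the verification of $d\Wd\W\psim=-|\Wd|^2\omega^3/6$ (via the argument of Proposition \ref{dw2psip}) and the final back-substitution, both of which the paper leaves implicit; your bookkeeping and signs are right.
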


In order to preserve the closedness of $\psip(t)$, we need
$$
d\left(\ddt\psip\right)=0.
$$
Moreover, taking the exterior derivative of both sides of the flow equation \eqref{flowpsip} of $\psip$ we have
$$
d\left(\ddt\psip\right) =-\frac74\tz \nu_t d\psim-\nu_t d\gamma.
$$
Comparing the two equations it then follows
\begin{equation}\label{dgamma}
d\gamma = -\frac74\tz \nu_t d\psim.
\end{equation}
Observe now that $d\gamma\W\omega =0$, since $\gamma$ is a primitive form of type $(2,1)+(1,2)$. Therefore, wedging both sides of \eqref{dgamma} by $\omega$ and recalling that 
$d\psim\W\omega=-\frac23c\omega^3$, we get
$$
\tz \nu_t c = 0,
$$
and then $\tz=0,$ since both $c$ and $\nu_t$ cannot be zero. In particular
\begin{equation*}
d\gamma=0.
\end{equation*}
We can summarize the results in the following
\begin{Proposition}
If the closedness of $\psip$ is preserved by the generalized Hitchin flow, then the intrinsic torsion form $\tz$ vanishes and the 3-form $\gamma$ is closed.
\end{Proposition}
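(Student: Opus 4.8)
The plan is to read off both conclusions from the single requirement $d\bigl(\ddt\psip\bigr)=0$, which is precisely what preservation of closedness demands, and then to test the resulting identity against $\omega$.

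First I would differentiate the flow equation \eqref{flowpsip} exteriorly along $N$. Since $\lt$, $\nu_t$ and $\tz$ are all spatially constant (recall $d\lt=0$, $d\nu_t=0$ and, from the domain-wall constraints, $d_7\tz=0$) and $\psip$ is closed along the coupled flow, every term collapses except the two carrying $d\psim$ and $d\gamma$; this gives $d\bigl(\ddt\psip\bigr)=-\frac74\tz\nu_t\,d\psim-\nu_t\,d\gamma$. Imposing that it vanish produces exactly \eqref{dgamma}, namely $d\gamma=-\frac74\tz\nu_t\,d\psim$.

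Next I would pair \eqref{dgamma} with $\omega$. The decisive point is that the left-hand side dies: primitivity gives $\gamma\W\omega=0$, and differentiating this identity yields $d\gamma\W\omega=\gamma\W d\omega=c\,\gamma\W\psip$, which vanishes because a form of type $(2,1)+(1,2)$ wedged with the $(3,0)+(0,3)$-form $\psip$ is zero for bidegree reasons. On the right-hand side, the coupled equations give $d\psim\W\omega=-\frac23c\,\omega^3$ (using $\Wd\W\omega^2=0$), so wedging \eqref{dgamma} by $\omega$ returns $0=\frac76\tz\nu_t c\,\omega^3$. As $\omega^3\neq0$ and neither the coupled constant $c$ nor $\nu_t$ can vanish, I conclude $\tz=0$; substituting back into \eqref{dgamma} leaves $d\gamma=0$.

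The only delicate step is the vanishing of $d\gamma\W\omega$: one must resist assuming that $d\gamma$ itself is primitive, and instead obtain the identity by differentiating $\gamma\W\omega=0$ and invoking the bidegree cancellation $\gamma\W\psip=0$. Everything else is bookkeeping, provided one has first secured that $\lt$, $\nu_t$ and $\tz$ are spatially constant so that they commute past $d$.
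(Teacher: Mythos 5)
Your proposal is correct and follows essentially the same route as the paper: differentiate the flow equation for $\psip$ to obtain $d\gamma=-\frac74\tz\nu_t\,d\psim$, wedge with $\omega$, use $d\gamma\W\omega=0$ and $d\psim\W\omega=-\frac23c\,\omega^3$ to force $\tz\nu_t c=0$, hence $\tz=0$ and $d\gamma=0$. Your derivation of $d\gamma\W\omega=0$ by differentiating $\gamma\W\omega=0$ and using $\gamma\W\psip=0$ for type reasons is exactly the justification the paper leaves implicit.
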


Let us now consider the expression of $d\psim$ in \eqref{hf} and differentiate it with respect to $t$ having in mind the results already obtained:
$$
d\left(\ddt\psim\right) = \left(-\frac23\dot{c}-\frac43c\lt\right)\omega^2  +\left(-\frac43c\nu_t-\lt\right)\Wd\W\omega - \ddt\Wd\W\omega-\nu_t\Wd\W\Wd.
$$  
Taking the exterior derivative of both sides of the flow equation \eqref{flowpsim} of $\psim(t)$ we get
$$
d\left(\ddt\psim\right) = -\lt c\omega^2 -\frac32\lt \Wd\W\omega+\nu_t d(J\gamma).
$$
Comparing the two equations we obtain that the flow of $\Wd$ must obey the following equation
\begin{equation*}
\ddt\Wd\W\omega= \frac16\nu_t |\Wd|^2\omega^2  +\left(-\frac43c\nu_t+\frac12\lt\right)\Wd\W\omega -\nu_t\Wd\W\Wd-\nu_t d(J\gamma).
\end{equation*}

We also know that the following necessary conditions deriving from the Bianchi identity $d_7\hat{H}=0$ must hold
\begin{eqnarray}
dS^X&=&0,\label{bianchi1}\\
dS_t &=& \ddt S^X,\label{bianchi2}
\end{eqnarray}
where $\hat{H} = dt\W S_t + S^X$ is the component of the ten-dimensional flux along $M$.

\begin{remark}
The other constraint obtained from the Bianchi identities was recalled earlier, it is $d_7\tz=0$.
\end{remark}

Using the previous results, it follows from \cite{DLS} that for a coupled structure
$$
S^X = \nu_t^{-1}\tut\psim+J\gamma,\qquad S_t = 0.
$$
From the first identity \eqref{bianchi1} we then get 
\begin{equation}\label{djgamma}
d(J\gamma)=-\nu_t^{-1}\tut d\psim.
\end{equation}
Observe that $d(J\gamma)\W\omega = 0$. Thus, if we wedge both sides of \eqref{djgamma} by $\omega$ we obtain
$$\nu_t^{-1}\tut c=0,$$
from which follows $\tut=0$ and, as a consequence, $d(J\gamma)=0$.
The second identity \eqref{bianchi2} then reads 
\begin{equation*}
\ddt(J\gamma)=0.
\end{equation*}
We can summarize here some of the results obtained:
\begin{enumerate}[i)]
\item the only non-vanishing intrinsic torsion form of the $G_2$-structure after imposing all conditions is $\ttr$. Moreover, $*\ttr^X=\gamma$ and $\ttrt=0$.
\item $d\nu_t=0$.
\item $d\gamma$ and $d(J\gamma)=0$, thus $\gamma$ is harmonic.
\item $\lt = \frac23\nu_t c(t).$
\end{enumerate}
In particular, the evolution equations of the differential forms defining the coupled structure become
\begin{eqnarray}
\ddt\omega(t) &=& \frac23\nu_t c(t)\omega(t)+\nu_t\Wd(t),\nonumber\\
\ddt\psip(t) &=& \nu_t c(t)\psip(t)-\nu_t\gamma,\nonumber\\
\ddt\psim(t) &=&\nu_t c(t)\psim(t)+\nu_t J\gamma.\nonumber
\end{eqnarray}
Moreover, the intrinsic torsion forms of the coupled structure must evolve as 
\begin{eqnarray}
\ddt c(t) &=& -\frac13\nu_t (c(t))^2-\frac14\nu_t |\Wd(t)|_{h(t)}^2,\nonumber\\
\ddt\Wd(t)\W\omega(t)&=& \frac16\nu_t |\Wd(t)|_{h(t)}^2(\omega(t))^2  -\nu_t c(t)\Wd(t)\W\omega(t) -\nu_t(\Wd(t))^2,\nonumber
\end{eqnarray}
and for each $t$ it must hold
\begin{equation}
d\Wd = -\frac14|\Wd|^2\psip- c\gamma.\nonumber
\end{equation}

\subsection{The Hitchin flow as a particular case of the generalized Hitchin flow}
If we suppose that $\nu_t=1$ and $\gamma=0$, then 
$$\f = dt\W\omega+\psip$$
is a parallel $G_2$-structure. In this case, the evolution equations of the differential forms $\omega(t),\psip(t),\psim(t)$ read
\begin{equation}\label{restrictedforms}\renewcommand\arraystretch{1.4}
\begin{array}{rcl}
\ddt\omega(t) &=& \frac23c(t)\omega(t)+\Wd(t),\\
\ddt\psip(t) &=& c(t)\psip(t),\\
\ddt\psim(t) &=& c(t)\psim(t),
\end{array}
\end{equation}
the evolution equations of the intrinsic torsion forms of the coupled structure must be
\begin{equation}\label{restrictedtorsion}\renewcommand\arraystretch{1.4}
\begin{array}{rcl}
\ddt c(t) &=& -\frac13(c(t))^2-\frac14|\Wd(t)|_{h(t)}^2,\\
\ddt\Wd(t)\W\omega(t)&=& \frac16 |\Wd(t)|_{h(t)}^2(\omega(t))^2  -c(t)\Wd(t)\W\omega(t) -(\Wd(t))^2,
\end{array}
\end{equation}
and for each $t$ the 2-form $\Wd$ has to satisfy the following property
\begin{equation}\label{restricteddw2}
d\Wd = -\frac14|\Wd|^2\psip,
\end{equation}
which is one of the conditions widely discussed in Section \ref{cpdsusy}.

A solution of these equations which is coupled for each $t$ is also a coupled solution for the Hitchin flow equations and vice-versa.
For example, the coupled solution for the Hitchin flow on the Iwasawa Lie algebra $\mathfrak{I}$ obtained in the proof of Proposition \ref{HFnosol} satisfies \eqref{restrictedforms} 
and the conditions \eqref{restrictedtorsion}, \eqref{restricteddw2}. 
In the general case, the presence of $\Wd(t)$ in the flow equations makes rather complicated any attempt to solve them. However, we can show that a solution of them starting from 
a coupled SU(3)-structure stays coupled as long as it exists.
\begin{Proposition}
Let $(\omega(t),\psip(t),c(t),\Wd(t))$ be a solution of the equations \eqref{restrictedforms}, \eqref{restrictedtorsion}, \eqref{restricteddw2}, 
with initial condition a coupled structure $(\omega(0),\psip(0))$ satisfying $d\omega(0) =c(0)\psip(0)$. 
Then $(\omega(t),\psip(t))$ is a coupled structure as long as it exists.
\end{Proposition}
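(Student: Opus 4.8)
The plan is to measure the failure of the coupled condition by a single time-dependent $3$-form and to show that it obeys a homogeneous linear ODE with vanishing initial datum. Concretely, I would introduce
$$
\Theta(t) := d\omega(t) - c(t)\psip(t),
$$
so that the coupled condition $d\omega(t) = c(t)\psip(t)$ is equivalent to $\Theta(t)=0$, and the hypothesis on the initial structure gives $\Theta(0)=0$.

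The core step is to differentiate $\Theta$ in $t$. Since the exterior derivative $d$ on $N$ commutes with $\ddt$, and $c$ depends only on $t$ so that $d$ annihilates it, the evolution equation $\ddt\omega = \frac23 c\,\omega + \Wd$ from \eqref{restrictedforms} gives $d\!\left(\ddt\omega\right) = \frac23 c\,d\omega + d\Wd$. Substituting the torsion identity \eqref{restricteddw2}, that is $d\Wd = -\frac14|\Wd|^2\psip$, together with $\ddt\psip = c\,\psip$ from \eqref{restrictedforms}, I obtain
$$
\ddt\Theta = \frac23 c\,d\omega - \frac14|\Wd|^2\psip - \dot{c}\,\psip - c^2\psip.
$$
Now the decisive cancellation comes from inserting the evolution of $c$ from \eqref{restrictedtorsion}, namely $\dot{c} = -\frac13 c^2 - \frac14|\Wd|^2$: the two terms proportional to $|\Wd|^2$ cancel and the $c^2$-terms combine to $-\frac23 c^2\psip$, leaving
$$
\ddt\Theta = \frac23 c\left(d\omega - c\,\psip\right) = \frac23 c(t)\,\Theta.
$$

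This last identity is a homogeneous linear ODE for the time-dependent form $\Theta$, with smooth coefficient $\frac23 c(t)$ on $I$; working in a local frame it is a linear system for the finitely many components of $\Theta$ at each point of $N$. Since $\Theta(0)=0$, the uniqueness theorem for linear ODEs, equivalently the integrating factor $\exp\!\left(\frac23\int_0^t c(s)\,ds\right)$, forces $\Theta(t)\equiv 0$ for as long as the solution is defined, which is exactly the assertion that $(\omega(t),\psip(t))$ remains coupled. I do not anticipate a real obstacle: the computation is purely algebraic once one records that $d$ commutes with $\ddt$ and that $c$ is spatially constant, and the only inputs are the evolution equations for $\omega$, $\psip$ and $c$ together with \eqref{restricteddw2}; in particular the evolution equations for $\psim$ and $\Wd$ play no role. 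The only point deserving a line of care is the passage to a local frame so that the standard ODE uniqueness theorem applies componentwise.
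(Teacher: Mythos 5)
Your proof is correct and follows essentially the same route as the paper: both set $\rho(t)=d\omega(t)-c(t)\psip(t)$, use the evolution equations together with $d\Wd=-\frac14|\Wd|^2\psip$ to derive $\ddt\rho=\frac23 c(t)\rho$, and conclude $\rho\equiv 0$ from the vanishing initial condition via the integrating factor. The only cosmetic difference is that the paper also records explicitly that $d\psip(t)=0$ follows as a consequence, which you may wish to add for completeness.
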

\begin{proof}
Consider $d\omega(t)-c(t)\psip(t)$, differentiating with respect to $t$ and using the hypothesis we get (omitting the $t$-dependence for brevity)
\begin{eqnarray}
\ddt(d\omega-c\psip) &=& d\left(\ddt\omega\right)-\dot{c}\psip-c\ddt\psip\nonumber\\
			         &=&  \frac23cd\omega+d\Wd+ \frac13c^2\psip+\frac14 |\Wd|^2\psip-c^2\psip\nonumber\\
			         &=&\frac23c(d\omega-c\psip).\nonumber
\end{eqnarray}
Thus, if we denote by $\rho(t) = d\omega(t)-c(t)\psip(t)$, we have that $\ddt\rho(t) = \frac23c(t)\rho(t)$. Therefore,
$\rho(t) = q(t)\rho(0),$ where $q(t) = e^{\int_0^t\frac23c(s)ds}$. But $\rho(0)=d\omega(0)-c(0)\psip(0) = 0$ since $(\omega(0),\psip(0))$ is coupled. 
Then $0=\rho(t) = d\omega(t)-c(t)\psip(t)$ and, as a consequence, $d\psip(t)=0$.
\end{proof}

\section{Conclusions}
In this paper, we considered from the mathematical point of view the properties of $\SU(3)$-structures which are of interest in the case of $\mathcal{N} = 1$ compactification of type 
IIA string theory to four-dimensional anti-de Sitter space on 6-manifolds endowed with an $\SU(3)$-structure, namely coupled structures satisfying (all or in part) the 
constraints given in \eqref{dw2proppsip} and \eqref{normcond}. 

First of all, we derived some properties of such structures and some constraints implied by them. These need to be taken into account when one looks for explicit examples.

We then turned our attention to examples of 6-manifolds endowed with this kind of $\SU(3)$-structures. 
In the case of nilmanifolds, we already knew that up to isomorphisms there are two non-abelian nilpotent Lie algebras admitting a coupled 
structure from \cite{FR}. Here, we showed that for only one of these the condition \eqref{dw2proppsip} is satisfied while the condition \eqref{normcond} cannot be ever satisfied. 
However, since in the physical setting conditions \eqref{dw2proppsip} and \eqref{normcond} can be relaxed in the presence of sources, the nilmanifolds generated by
 $\mathfrak{I}$ and $\mathfrak{N}$ may be used to construct examples of the considered type of compactification. 
This was done for the Iwasawa manifold in \cite{CKKLTZ}, thus it would be interesting to see what happens for the nilmanifold  corresponding to $\mathfrak{N}$. 
We also recalled an example firstly described in \cite{Tom}, this is of particular interest not only because it answers a question arising from \cite{Raf}, but also because allows to 
construct examples of $G_2$-structures with torsion inducing remarkable metrics.

In the last section, we considered the behaviour of the coupled condition with respect to the Hitchin flow and one of its possible generalizations determined starting from 
four-dimensional domain wall solutions of heterotic string theory preserving $\mathcal{N}=\frac12$ supersymmetry. 
We observed that it is not always possible to find coupled solutions for the Hitchin flow by working on explicit examples and derived the conditions implied by requiring that the 
coupled condition is preserved by the generalized Hitchin flow. 
An interesting open question would be to see whether there exist any example of a 1-parameter family of $\SU(3)$-structures which solves the Hitchin flow equations 
and is coupled for at least one but not for all $t$.

\end{document}